\title{Spiked matrix}
\author{Tengyuan Liang, Subhabrata Sen, Pragya Sur}
\definecolor{Plum}{rgb}{.4,0,.4} 
\definecolor{BrickRed}{rgb}{0.6,0,0} 
\providecommand{\keywords}[1]{\small \textbf{\textit{Keywords---}} #1}
\numberwithin{equation}{section}
\theoremstyle{plain}
\newtheorem{theorem}{Theorem}[section]
\newtheorem{lemma}[theorem]{Lemma}
\theoremstyle{remark}
\newtheorem{definition}[theorem]{Definition}
\newtheorem{remark}{Remark}
\def\ddefloop#1{\ifx\ddefloop#1\else\ddef{#1}\expandafter\ddefloop\fi}
\def\ddef#1{\expandafter\def\csname c#1\endcsname{\ensuremath{\mathcal{#1}}}}
\def\ddef#1{\expandafter\def\csname s#1\endcsname{\ensuremath{\mathsf{#1}}}}
\def\ddef#1{\expandafter\def\csname b#1\endcsname{\ensuremath{\mathbf{#1}}}}
\def\ddef#1{\expandafter\def\csname b#1\endcsname{\ensuremath{\mathbf{#1}}}}
\def\E{\mathop{\mathbb{E}}}
\def\Reals{\mathbb{R}}
\def\dif#1{\mathrm{d}{#1}\,}
\def\bsigma{\bm{\sigma}}
\newcommand{\iu}{{i\mkern1mu}}
\newcommand*\dd{\mathop{}\!\mathrm{d}}
\begin{document}

\title{High-dimensional Asymptotics of Langevin Dynamics in Spiked Matrix Models}

\author[1]{Tengyuan Liang\thanks{\tt \href{mailto:tengyuan.liang@chicagobooth.edu}{tengyuan.liang@chicagobooth.edu}}}
\author[2]{Subhabrata Sen\thanks{\tt \href{mailto:subhabratasen@fas.harvard.edu}{subhabratasen@fas.harvard.edu}}}
\author[2]{Pragya Sur\thanks{\tt \href{mailto:pragya@fas.harvard.edu}{pragya@fas.harvard.edu}}}
\affil[1]{University of Chicago}
\affil[2]{Harvard University}

\maketitle

\begin{abstract}We study Langevin dynamics for recovering the planted signal in the spiked matrix model. We provide a ``path-wise" characterization of the overlap between the output of the Langevin algorithm and the planted signal. This overlap is characterized in terms of a self-consistent system of integro-differential equations, usually referred to as the Crisanti-Horner-Sommers-Cugliandolo-Kurchan (CHSCK) equations in the spin glass literature. As a second contribution, we derive an explicit formula for the limiting overlap in terms of the 
signal-to-noise ratio and the injected noise in the diffusion. This uncovers a sharp phase transition---in one regime, the limiting overlap is strictly positive, while in the other, the injected noise overcomes the signal, and the limiting overlap is zero.

\end{abstract}

\keywords{Langevin dynamics, high-dimensional asymptotics, spiked matrices, spin-glasses}


\bigskip

\section{Introduction}
\label{sec:Introduction} 
Gradient descent based methods and their noisy counterparts are routinely used in modern Machine Learning. 
For a host of learning problems, it has now been established that
gradient based methods converge to special estimators with attractive generalization properties (\cite{soudry2018implicit},  \cite{chizat2020implicit},  \cite{gunasekar2018implicit}, \cite{gunasekar2018characterizing}, \cite{ma2018implicit}, \cite{ma2018power},  \cite{ji2019implicit}, \cite{nacson2019convergence}, \cite{arora2019implicit}, \cite{jin2020implicit}, \cite{zhang2021understanding}, \cite{chatterji2021does}). Thus the limiting performance of gradient descent and its variants can often  be characterized via  careful analyses of these special limiting estimators (c.f.~ \cite{montanari2019generalization}, \cite{belkin2019does}, \cite{deng2019model}, \cite{liang2020multiple}, , \cite{liang2020precise} , \cite{chatterji2021finite} and the references cited therein).
However, an understanding of ``path-wise" properties of these algorithms still lies in its infancy. In this paper, we consider Langevin dynamics  as a proxy for stochastic gradient descent, and a simple recovery problem with a non-convex objective function---that of recovering a planted rank 1 matrix under additive Gaussian noise.

Formally, we observe a symmetric matrix $\bJ \in \Reals^{N\times N}$, given by 
\begin{align}
	\bJ =  V V^\top  + \bZ, \label{eq:model} 
\end{align}
where $V \in \Reals^N$. 
We assume $\bZ = \bG^\top \bD \bG$ where $\bD$ is diagonal and $\bG$ is Haar distributed.  
Denote $\bD = {\rm Diag}(\sigma^1,\ldots, \sigma^N)$ to be a sequence of deterministic diagonal matrices. As $N$ increases, assume that $\mu_{D_N} = \frac{1}{N} \sum_{i=1}^{N} \delta_{\sigma^i}$ converges weakly to a probability measure $\mu_{D}$ with compact support. Let $d_+, d_{-}$ denote the upper and lower edges respectively of $\mu_{D}$. We assume throughout that $\max_{1\leq i \leq N} \sigma^i$ and $\min_{1\leq i \leq N} \sigma^i$ converge to $d_+$ and $d_{-}$ respectively. We assume that the entries of $V=(V^i)$ are iid $\mathcal{N}(0,\frac{\lambda}{N})$ for some $\lambda>0$.
We seek to recover the planted truth $V$, given the observation $\mathbf{J}$. The natural estimator in this setting is derived from PCA---one computes the eigenvector $\hat{v}$ corresponding to the largest eigenvalue of $\mathbf{J}$, and uses $\hat{v} \hat{v}^{\top}$ as an estimator for the latent subspace $VV^{\top}$. The performance of this estimator can be precisely characterized using recent advances in Random Matrix Theory (\cite{baik2005phase,benaych2011eigenvalues}). In particular, assume that the empirical spectral measure $\mu_{D_N}$ converges weakly to a limiting measure $\mu_{D}$ supported on $[d_{-},d_+]$. Define $G_{\mu_D}: \mathbb{R}\backslash [d_{-},d_+] \to \mathbb{R}$, 
\begin{align}
    G_{\mu_D}(z) = \mathbb{E}_{X \sim \mu_D}\Big[\frac{1}{z-X} \Big] \nonumber 
\end{align}
as the Cauchy transform of $\mu_D$. Further, define 
\begin{align}
    G_{\mu_D}(d^+) = \lim_{z \downarrow d_+} G_{\mu_D}(z),\,\,\,\,\, G_{\mu_D}(d^-) = \lim_{z \uparrow d_{-}} G_{\mu_D}(z). \nonumber 
\end{align}
The BBP phase transition establishes that if $\lambda > \frac{1}{G_{\mu_D}(d^+)}$, there exists an ``outlier" eigenvalue at $G_{\mu_D}^{-1}(1/\lambda)$; otherwise, the largest sample eigenvalue sticks to the spectral edge $d_+$. In the first case, the eigenvector $\hat{v}$ corresponding to the outlying eigenvalue has a non-trivial overlap with the planted signal $V$, i.e. with high probability, $|V^{\top}\hat{v}| >0$. In the latter case, $V^{\top}\hat{v} \stackrel{P}{\to}0$. 

To study the Langevin dynamics in this setting, we introduce the following 
system of Stochastic Differential Equations
\begin{align}\label{eq:Ld}
	\dif{X^i_t} = \sum_{j=1}^N \bJ_{ij} X_t^j \dif{t} - f'\big( \frac{1}{N} \sum_{j=1}^N (X_t^j)^2 \big) X_t^{i} \dif{t} + \beta^{-1/2} \dif{W_t^i} \enspace.
\end{align}
\noindent
We assume that $f:[0,\infty) \to \mathbb{R}$ satisfies $f'$ to be non-negative and Lipschitz. Eventually, we will apply our results to the special case $f(x)=x^2/2$. Note that this SDE can be looked upon as a penalized version of the PCA problem on taking $\beta \rightarrow \infty$. 
The function $f$ acts as a ``confining" potential, so that we can work without a norm constraint on the diffusion. For any $N \geq 1$ and $T\geq 0$, the SDE \eqref{eq:Ld} has a strong solution, which we denote by $\bm{X}_t \equiv \{X_t^i: 1\leq i \leq N, t\in [0,T]\}$ (see \cite[Lemma 6.7]{arous2001AgingSpherical} for a proof).  

The strong solution to the Langevin diffusion \eqref{eq:Ld} defines a natural collection of estimators indexed by $t\in [0,\infty)$. We track the statistical performance of these estimators via the normalized ``overlap" $R^N(t) = \frac{1}{N} \sum_{i=1}^{N} \sqrt{N} V^i X_t^i$. We note that the entries of $V$ are typically of order $1/\sqrt{N}$; the multiplicative factor $\sqrt{N}$ normalizes these entries so that the resultant is of order 1. Armed with these notations, we can pose a natural question of interest:

\begin{center}
\emph{How does the overlap $R^N(t)$ evolve over time? } 
\end{center}

In this paper, we provide sharp asymptotics for the evolution of $R^N(t)$ under a high-dimensional asymptotic limit where we send $N\to \infty$ with $t \geq 0$ \emph{fixed}. In practice, one should interpret these limits as approximate characterizers of the overlap when the dimension $N$ is large, while the diffusion has been run for a ``short time". In particular, we emphasize that the time scales involved are significantly shorter than those involved for ``mixing" of these diffusion processes. We believe that these asymptotics are particularly relevant for Statistics and Machine Learning. In particular, it allows one to explicitly characterize the statistical effect of ``early stopping" in this problem. 

Our first result characterizes the limiting behavior of $R^N(t)$. We will see that the behavior of $R^N$ is intricately tied to that of the ``auto-correlation" function $K^N(t,s) = \frac{1}{N} \sum_{i=1}^{N} X_t^i X_s^i$. Note that for any $T\geq 0$, $R^N$ and $K^N$ are sequences of (random) continuous functions on $[0,T]$ and $[0,T]^2$ respectively. In subsequent discussions, we equip these metric spaces with the sup-norm topology. 

In addition, we need to specify the initial conditions for the Langevin diffusion $\{\bX_t: t \geq 0\}$. We work under the following two sets of initial conditions.

\noindent 
\textbf{Initial Conditions:}
\begin{itemize}
    \item[(i)] (I.I.D. initial conditions) Assume that $\{(\sqrt{N} V^i,X_0^i): 1\leq i \leq N\}$ are i.i.d., independent of $\bG$ and $\{\bW_t: t \geq 0\}$. Assume furthermore that $\mathbb{E}[(X_0^i)^2]=1$, $\mathbb{E}[V^i X_0^i] = \frac{\rho \sqrt{\lambda} }{\sqrt{N}}$ for some $\rho \in [0,1]$. 
    \item[(ii)]  (I.I.D. under rotated basis) Define $\bY_t = \bG \bX_t$, $U= \bG V$, $\bB_t = \bG \bW_t$. Assume that $\{(Y_0^i, \sqrt{N} U^i): 1\leq i \leq N\}$ are i.i.d., independent of $\{\bB_t: t \geq 0\}$. Assume in addition that, $Y_0^i \sim F$ i.i.d,  $\mathbb{E}[(Y_0^i)^2]=1$, $\mathbb{E}[U^i Y_0^i] = \frac{\rho \sqrt{\lambda} }{\sqrt{N}}$ for some $\rho \in [0,1]$. 
\end{itemize}

\begin{remark}
Note that the parameter $\rho$ governs the correlation between the initialization and the spike direction. For concreteness, we assume that $\rho \geq 0$. The results generalize directly to $\rho<0$ if we replace $V$ by $-V$. In addition, we require the second moments of $X_0$ and $Y_0$ to be finite. The precise value 1 is chosen merely for convenience.  
\end{remark}

\begin{remark}
The I.I.D. initial condition is arguably the most natural initialization in Statistics and Machine Learning. The I.I.D. under rotated basis condition, although a bit less natural from this perspective, is simpler to analyze with the same theoretical tools. We think of the second initialization throughout as a warm-up, with the first initialization being of principal interest.  
\end{remark}

\begin{theorem}
\label{thm:limits} 
Assume one of the Initial Conditions specified above. Fix  $T\geq 0$.  As $N \to \infty$, $R^N$ and $K^N$ converge almost surely to deterministic limits $R$ and $K$ respectively. Furthermore, these limits are the unique solutions to  the following system of integro-differential equations: 
\begin{align}\label{eq:sys1}
	 R(t) & = \exp\left\{ -\int_0^t f'(K(s)) \dif{s} \right\} \cdot \E\limits_{(\bu, \bsigma, \bY_0)\sim \pi^{\infty}} \left[ \exp(\bsigma t) \bY_0  \bu  \right] \nonumber \\
	 &\qquad + \int_0^t  \exp\left\{ - \int_{s}^{t} f'(K(r)) \dif{r} \right\}  R(s)  \E\limits_{(\bu, \bsigma, \bY_0)\sim \pi^{\infty}} \left[ \exp(\bsigma(t-s) )\bu^2  \right]  \dif{s}. 
\end{align}

\begin{align}\label{eq:sys2}
	 &K(t, s) = \beta^{-1} \int_{0}^{s\wedge t} \exp\left\{ - \int_r^t f'(K(w)) \dif{w} - \int_r^s f'(K(w)) \dif{w} \right\} \E\limits_{(\bu, \bsigma, \bY_0)\sim \pi^{\infty}} \left[  \exp(\bsigma(t+s-2r))\right] \dif{r} \nonumber \\
	&\qquad  + \exp\left\{ -\int_0^t f'(K(r)) \dif{r} -\int_0^s f'(K(r)) \dif{r}  \right\} \cdot \E\limits_{(\bu, \bsigma, \bY_0)\sim \pi^{\infty}} \left[ \exp(\bsigma (t+s)) \bY_0^2 \right] \nonumber \\
	& \qquad + \exp\left\{ -\int_0^t f'(K(r)) \dif{r} \right\}  \cdot \int_0^s \exp\left\{ -\int_r^s f'(K(w)) \dif{w} \right\} R(r) \E\limits_{(\bu, \bsigma, \bY_0)\sim \pi^{\infty}} \left[ \exp(\bsigma (t+s-r)) \bY_0 \bu \right] \dif{r} \nonumber \\
	& \qquad + \exp\left\{ -\int_0^s f'(K(r)) \dif{r} \right\}  \cdot \int_0^t \exp\left\{ -\int_r^t f'(K(w)) \dif{w} \right\} R(r) \E\limits_{(\bu, \bsigma, \bY_0)\sim \pi^{\infty}} \left[ \exp(\bsigma (t+s-r)) \bY_0 \bu \right] \dif{r} \nonumber \\
	&  + \int_0^t \dif{r_1} \int_0^s \dif{r_2} \left\{ \exp\left\{ - \int_{r_1}^t f'(K(w)) \dif{w} -\int_{r_2}^s f'(K(w)) \dif{w}   \right\}  R(r_1) R(r_2) \E\limits_{(\bu, \bsigma, \bY_0)\sim \pi^{\infty}} \left[ \exp(\bsigma (t+s-r_1-r_2)) \bu^2 \right] \right\}.
\end{align}
Here we use the abbreviated notation $K(t):= K(t,t)$.
It remains to specify the distribution $\pi^{\infty}$; this limit depends on the initial condition: 
\begin{itemize}
    \item[(i)] Under I.I.D. initial conditions, $\pi^{\infty} = \mathcal{N}(0,1) \otimes \mu_D \otimes \mathcal{N}(0,1)$. 
    \item[(ii)] Under I.I.D. under rotated basis initialization, $\pi^{\infty} = \mathcal{N}(0,1) \otimes \mu_{D} \otimes F$. 
\end{itemize}
\end{theorem}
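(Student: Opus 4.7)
}

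The plan is to pass to the rotated basis and reduce the nonlinear interacting SDE to $N$ decoupled linear SDEs driven by two scalar ``feedback'' processes, namely $R^N$ and $K^N$ themselves. Let $\mathbf{Y}_t = \mathbf{G}\mathbf{X}_t$, $U = \mathbf{G}V$, $\mathbf{B}_t = \mathbf{G}\mathbf{W}_t$. Because $\mathbf{G}\mathbf{J}\mathbf{G}^\top = \mathbf{D} + U U^\top$ and both $K^N(t,s) = \tfrac{1}{N}\sum_i Y_t^i Y_s^i$ and $R^N(t) = \tfrac{1}{\sqrt{N}}\sum_i U^i Y_t^i$ are rotation-invariant, \eqref{eq:Ld} becomes
\begin{equation*}
    dY_t^i \;=\; \sigma^i Y_t^i\,dt \;+\; \sqrt{N}\,U^i R^N(t)\,dt \;-\; f'(K^N(t))\,Y_t^i\,dt \;+\; \beta^{-1/2}\, dB_t^i.
\end{equation*}
Under I.I.D.\ condition (ii) this is the natural coordinate system; under (i) one uses that $V$ is Gaussian and $\mathbf{G}$ is Haar (independent of everything) to show, via rotation invariance and standard free-probability-type arguments, that the empirical measure of the triples $(\sqrt{N}U^i,\sigma^i,Y_0^i)$ converges almost surely to $\pi^{\infty} = \mathcal{N}(0,\lambda)\otimes \mu_D \otimes \mathcal{N}(0,1)$.

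Treating the (still random) scalar processes $R^N$ and $K^N$ as given, each component SDE is linear and can be solved by variation of parameters. Writing $E^N(s,t) := \exp\{-\int_s^t f'(K^N(w))\,dw\}$ and $\bar U^i := \sqrt{N}U^i$,
\begin{equation*}
    Y_t^i = e^{\sigma^i t}E^N(0,t)\,Y_0^i \;+\; \bar U^i\int_0^t e^{\sigma^i(t-s)}E^N(s,t)\,R^N(s)\,ds \;+\; \beta^{-1/2}\int_0^t e^{\sigma^i(t-s)}E^N(s,t)\,dB_s^i.
\end{equation*}
Substituting this expression into $R^N(t) = \tfrac{1}{N}\sum_i \bar U^i Y_t^i$ and $K^N(t,s) = \tfrac{1}{N}\sum_i Y_t^i Y_s^i$ rewrites both in terms of empirical averages of functions of $(\bar U^i,\sigma^i,Y_0^i)$ together with stochastic integrals against the independent Brownian motions $B^i$. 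By the law of large numbers applied to the convergence of empirical measures mentioned above (together with the weak convergence of $\mu_{D_N}$ to $\mu_D$ and the assumed convergence of the edges), each of the empirical averages converges to the corresponding expectation under $\pi^{\infty}$. The cross-terms involving a single factor of $dB^i$ vanish in $L^2$ by independence across $i$, and the pure quadratic noise contribution produces the $\beta^{-1}$ term in \eqref{eq:sys2} via Itô isometry. After collecting terms this yields exactly the right-hand sides of \eqref{eq:sys1}--\eqref{eq:sys2} with $(R^N,K^N)$ in place of $(R,K)$.

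To turn this outline into a proof one must (a) upgrade pointwise to uniform-in-$t$ convergence on $[0,T]$, and (b) break the circular dependence of $K^N$ on itself through $f'$. For (a), I would prove a uniform second-moment bound $\sup_{N}\sup_{t\le T}\mathbb{E}[K^N(t,t)] \le C_T$ by an energy estimate: using non-negativity of $f'$ to kill the confining term, the operator norm of $\mathbf{J}$ to bound the linear growth term, and Itô's formula on $\|\mathbf{X}_t\|^2$. This bound gives equicontinuity of $R^N$ and $K^N$, which combined with separable convergence at a dense set of times yields uniform convergence. For (b), Gronwall arguments controlled by the Lipschitz constant of $f'$ and the a priori bound on $K^N$ close the loop: one shows the map $\Phi:(R,K)\mapsto$ (RHS of \eqref{eq:sys1}--\eqref{eq:sys2}) is a contraction in $\|\cdot\|_\infty$ on $[0,t_0]^2$ for $t_0$ small, yielding existence and uniqueness locally, then iterating in time gives uniqueness on $[0,T]$. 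The main technical obstacle I anticipate is precisely this interplay: the empirical LLN approximation to the integrals requires a priori smoothness and boundedness of $K^N$, while the bounds on $K^N$ themselves come from the feedback structure; pushing the argument through cleanly requires a careful bootstrap, perhaps carried out via a Picard-like approximation scheme that first constructs the candidate limit $(R,K)$ from \eqref{eq:sys1}--\eqref{eq:sys2} and then compares $(R^N,K^N)$ to it by a Gronwall estimate.
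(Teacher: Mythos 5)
Your overall architecture matches the paper's: rotate to the eigenbasis of $\bZ$, solve each coordinate by variation of parameters, substitute back into $R^N$ and $K^N$ to obtain a self-consistent system driven by empirical averages, pass to the limit by a law of large numbers, and close the loop with a Picard/contraction argument for uniqueness. (As a side point, your identification of the first marginal of the limit as $\mathcal{N}(0,\lambda)$ for $\bu=\sqrt{N}U$ is the one consistent with the paper's later use of $\E[\bu^2]=\lambda$.)

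There is, however, one concrete gap: your treatment of the noise terms. You keep the stochastic integrals $\int_0^t e^{\sigma^i(t-s)}E^N(s,t)\,dB_s^i$ and propose that the cross terms ``vanish in $L^2$ by independence across $i$'' while the quadratic term concentrates ``via It\^o isometry.'' But the integrand $E^N(s,t)=\exp\{-\int_s^t f'(K^N(w))\,dw\}$ is a functional of \emph{all} the Brownian motions and of the disorder, so the $N$ stochastic integrals are not independent across $i$, and the concentration of $\frac{1}{N}\sum_i(\int dB^i)(\int dB^i)$ around a deterministic limit does not follow from It\^o isometry plus independence as stated; the same coupling obstructs the vanishing of the cross terms. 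The paper circumvents exactly this by an integration by parts ($\int_0^t f_s\,dB_s = f_tB_t - \int_0^t B_s f_s'\,ds$, Lemma \ref{lemma:soln_alternative}), which eliminates all stochastic integrals and expresses $Y_t^i$ as a continuous functional of the path $B^i_\bullet$ itself. Then $(R^N,K^N)$ become continuous functions (Lemmas \ref{lemma:empirical_function}, \ref{lemma:continuity}) of a \emph{fixed, finite} family of empirical path-functionals $\mathcal{C}_N$ that do not depend on $K^N$, and the only probabilistic input needed is the a.s.\ uniform convergence of $\mathcal{C}_N$ (Lemma \ref{lemma:deterministic}, via pointwise LLN plus uniform H\"older estimates). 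Your Picard/Gronwall bootstrap at the end is the right instinct for breaking the circular dependence on $K^N$, but without the integration-by-parts reduction (or an equivalent decoupling device) the LLN step for the noise contributions does not go through as written.
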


Theorem \eqref{thm:limits} holds for any $t \in [0,T]$. This allows one to characterize the overlap at any time $t$, for sufficiently large $N$. In this light, Theorem \eqref{thm:limits} may be interpreted as a quantification of the effects of early stopping on Langevin dynamics.  
However, though the theorem  provides a precise characterization of $R^N(t)$ under a high-dimensional limit, it is a priori unclear whether this description yields an explicit understanding regarding the behavior of $R^{N}(t)$. This is primarily due to the  mathematically involved nature of the fixed point equations \eqref{eq:sys1}-\eqref{eq:sys2}. To gain a better understanding of the CHSCK equations, our next theorem illustrates that Theorem \eqref{thm:limits} can yield explicit results on the correlation between the output $\bX_t$ and the latent vector $\sqrt{N}V$  under the double limit $t \to \infty$, following $N \to \infty$.
To this end, first note that this correlation can be captured through the ratio $R^2(t)/
(\lambda K(t,t))$, since  
\begin{align}
     \frac{\Big( \sum_{i=1}^{N} \sqrt{N} V^i  X_t^i\Big)^2}{[\sum_{i=1}^{N} (X_t^i)^2] [ N \sum_{i=1}^{N} (V^i)^2]}=\frac{(R^{N}(t))^2}{[K^N(t,t)][\sum_{i=1}^{N} (V^i)^2] }.  \nonumber 
\end{align}
and $\|V\|_2^2 \to \lambda$ almost surely under our initial conditions.

To precisely characterize the limiting correlation, we specialize to the following setting: consider $f(x) = x^2/2$ and $\mu_D$ to be the scaled semi-circle distribution, supported on $[-\sigma_{\star}, \sigma_{\star}]$, for some $\sigma_{\star} >0$. This corresponds to a setting where the additive noise $\bZ$ in \eqref{eq:model} is a symmetric Gaussian matrix. Note that $f(x)=x^2/2$ satisfies the regularity conditions required in Theorem \ref{thm:limits}. Formally, the semi-circle law on $[-\sigma_{\star}, \sigma_{\star}]$ has a density 
\begin{align}
    \frac{\dif{\mu_{D}}}{\dif{x}} = \frac{2}{\pi \sigma_{\star}^2}\sqrt{ \sigma_\star^2 - x^2}, \,\,\,\, -\sigma_{\star} \leq x \leq \sigma_{\star}. \label{eq:semi-circle}  
\end{align}
Let $\mathcal{S}:\mathbb{R} \backslash [-\sigma_{\star}, \sigma_{\star}] \to \mathbb{R}$ denote the Stieljes transform of $\mu_{D}$, i.e., 
\begin{align}
    \mathcal{S}(z) =  \E_{\bsigma\sim \mu_D}\big[ \frac{1}{z - \bsigma} \big] = \frac{2}{z + \sqrt{z^2 - \sigma_\star^2}}. \label{eq:stieljes} 
\end{align}

\begin{theorem}
\label{thm:limit_value}
Assume $\lambda > \sigma_{\star}/2$, and  set $\tilde{\lambda} = \lambda + \frac{\sigma_{\star}^2}{4 \lambda}$. If $\beta<\frac{1}{\sigma_{\star}^2}$, the equation $z= \beta^{-1} \mathcal{S}(z/2)$ has two real roots. Set $s_{\beta}$ to be the largest real root of this equation if $\beta< \frac{1}{\sigma_{\star}^2}$, otherwise set $s_{\beta}= 2\sigma_{\star}$. 
\begin{itemize}
    \item[(i)] If $2 \tilde{\lambda} < s_{\beta}$ or $\rho=0$, $\lim_{t \to \infty} \frac{R(t)^2}{\lambda K(t,t)} =0$. 
    \item[(ii)] If $2 \tilde{\lambda} > s_{\beta}$ and $\rho>0$,
    \begin{align}
    \lim_{t\to \infty} \frac{R(t)^2}{\lambda K(t,t)} =   \left( 1-\frac{\beta^{-1}}{2 \lambda(\lambda + \sigma_{\star}^2/4\lambda)}\right)\left( 1-\frac{\sigma_{\star}^2}{4\lambda^2}\right) .
    \end{align} 
\end{itemize}
\end{theorem}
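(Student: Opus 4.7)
The plan is to pass to the $t \to \infty$ limit in the coupled CHSCK system of Theorem~\ref{thm:limits} under the ansatz that $R(t) \to R_\infty$ and $K(t,t) \to K_\infty$. With $f'(K)=K$, the decay factor $\phi(t) := \exp(-\int_0^t K(r,r)\,\mathrm{d}r)$ behaves asymptotically like $e^{-K_\infty t}$ and the kernel $\exp(-\int_s^t K)$ reduces to $e^{-K_\infty(t-s)}$. Under the semicircle law one has the Laplace-type asymptotic $\mathbb{E}_{\bsigma \sim \mu_D}[e^{\bsigma u}] \sim c\,e^{\sigma_\star u}/u^{3/2}$, so every integral in \eqref{eq:sys1}--\eqref{eq:sys2} becomes absolutely convergent once $K_\infty > \sigma_\star$, and dominated convergence lets one pass the $t \to \infty$ limit inside. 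The initial-condition specification encodes $\mathbb{E}[\bu^2] = \lambda$, $\mathbb{E}[\bu\,\bY_0] = \rho\sqrt{\lambda}$, and independence of $\bsigma$ from $(\bu,\bY_0)$, so expectations under $\pi^\infty$ factor cleanly.

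Applied to \eqref{eq:sys1} with the substitution $u = t-s$, the driving term $\phi(t)\mathbb{E}[e^{\bsigma t}\bY_0\bu]$ decays like $e^{-(K_\infty - \sigma_\star)t}/t^{3/2}$ and the convolution collapses to $\int_0^\infty e^{-K_\infty u}\mathbb{E}[e^{\bsigma u}\bu^2]\,\mathrm{d}u = \lambda\,\mathcal{S}(K_\infty)$, yielding the dichotomy $R_\infty = \lambda R_\infty\,\mathcal{S}(K_\infty)$: either $R_\infty = 0$, or $\mathcal{S}(K_\infty) = 1/\lambda$, which via $\mathcal{S}(z) = 2/(z+\sqrt{z^2-\sigma_\star^2})$ solves to $K_\infty = \tilde{\lambda}$. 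An analogous term-by-term reduction of \eqref{eq:sys2} at $s=t$ shows that both initial-condition terms and the $R$-initial cross term decay like $e^{-(K_\infty-\sigma_\star)t}$; the thermal term converges to $(\beta^{-1}/2)\mathcal{S}(K_\infty)$; and the double integral (after $u_i = t - r_i$ and Fubini) converges to $\lambda R_\infty^2\,\mathbb{E}_{\bsigma}[(K_\infty - \bsigma)^{-2}] = -\lambda R_\infty^2\,\mathcal{S}'(K_\infty)$, producing the second scalar identity
\[
K_\infty = \frac{\beta^{-1}}{2}\,\mathcal{S}(K_\infty) \,-\, \lambda R_\infty^2\,\mathcal{S}'(K_\infty).
\]

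In the trivial branch $R_\infty = 0$, substituting $z = 2K_\infty$ recovers exactly $z = \beta^{-1}\mathcal{S}(z/2)$, so $K_\infty = s_\beta/2$. In the nontrivial branch $K_\infty = \tilde{\lambda}$, the identity $\tilde{\lambda} + \sqrt{\tilde{\lambda}^2 - \sigma_\star^2} = 2\lambda$ (valid because $\lambda > \sigma_\star/2$) gives $-\mathcal{S}'(\tilde{\lambda}) = 1/(\lambda^2 - \sigma_\star^2/4)$; solving for $R_\infty^2$ and dividing by $\lambda\tilde{\lambda}$, elementary algebra collapses $R_\infty^2/(\lambda\tilde{\lambda})$ to $(1 - \beta^{-1}/(2\lambda\tilde{\lambda}))(1 - \sigma_\star^2/(4\lambda^2))$, the announced value. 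For branch selection: if $\rho = 0$, the driving term in the $R$-equation vanishes identically and the equation becomes linear homogeneous, forcing $R \equiv 0$ and the trivial branch. If $\rho > 0$, substituting $\mathcal{S}(\tilde{\lambda}) = 1/\lambda$ into the defining equation of $s_\beta$ yields the equivalence $2\tilde{\lambda} = s_\beta \Leftrightarrow 2\lambda\tilde{\lambda} = \beta^{-1}$; this identifies $2\tilde{\lambda} > s_\beta$ with the sign condition $R_\infty^2 > 0$, and under this condition the nontrivial fixed point $K_\infty = \tilde{\lambda}$ is feasible (with $\tilde{\lambda} > \sigma_\star$ automatic from $\lambda > \sigma_\star/2$) and selected. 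Under $2\tilde{\lambda} < s_\beta$ one has $\tilde{\lambda} < s_\beta/2$; monotonicity of $\mathcal{S}$ on $(\sigma_\star,\infty)$ gives $\mathcal{S}(\tilde{\lambda}) > \mathcal{S}(s_\beta/2)$, and the only consistent solution is the trivial branch $R_\infty = 0$, $K_\infty = s_\beta/2$.

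The principal technical obstacle is the justification of the ansatz itself: rigorously proving that $R(t)$ and $K(t,t)$ admit long-time limits and that those limits may be exchanged with the integrals in \eqref{eq:sys1}--\eqref{eq:sys2}. I expect this requires a monotonicity or contraction argument on the CHSCK kernel in a suitably weighted function space, coupled with careful bookkeeping of the competition between the polynomial prefactor $1/u^{3/2}$ and the exponential $e^{(\sigma_\star - K_\infty)u}$ near the spectral edge. This edge competition is precisely where the BBP-like phase transition in the two branches is encoded, and is the most delicate step of the argument.
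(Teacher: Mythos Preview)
Your fixed-point computation is formally correct and yields the right answer, but the route differs substantially from the paper's, and the gap you flag is not a technicality---it is the entire difficulty.

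The paper does not attempt to pass to the limit inside \eqref{eq:sys1}--\eqref{eq:sys2}. Instead it introduces the change of variables $g(t)=E(t)R(t)$ and $h(t)=E^2(t)K(t,t)$ with $E(t)=\exp\bigl(\int_0^t K(s,s)\,\mathrm{d}s\bigr)$, and observes that $R^2/K=g^2/h$. The point of this substitution is that in \eqref{eq:sys1} the factors $E(\cdot)$ exactly cancel the kernels $\exp(-\int_s^t K)$, so the equation for $g$ becomes a \emph{linear} Volterra convolution equation in $g$ alone, with no reference to $K$ whatsoever. This is solved explicitly by Laplace transform and Fourier--Mellin inversion (a contour integral with a simple pole at $\tilde\lambda$ and a branch cut on $[-\sigma_\star,\sigma_\star]$), producing the exact formula $g(t)=\sqrt{\lambda}\rho\bigl[(1-\sigma_\star^2/4\lambda^2)_+e^{\tilde\lambda t}+\text{edge term}\bigr]$. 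For $h$, one differentiates $F=E^2$ to get a linear Volterra equation in $F$ driven by a source $\Phi$ built from $g$; its Laplace transform is computed in closed form, and the large-$t$ asymptotics of $h$ are then extracted by a Tauberian lemma (locating the rightmost singularity of $\mathcal L_h$, which is either the pole of $\mathcal L_\Phi$ at $2\tilde\lambda$ or the root $s_\beta$ of $z=\beta^{-1}\mathcal S(z/2)$, whichever is larger). The phase transition is thus read off from the competition between these two singularities---no a priori convergence of $R$ or $K$ is ever assumed.

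Your ansatz $R(t)\to R_\infty$, $K(t,t)\to K_\infty$ is consistent with the paper's asymptotics a posteriori, but to make your argument rigorous you would need to prove it first, and the monotonicity/contraction scheme you sketch is not how the paper closes this gap. The decoupling via $g,h$ is the idea you are missing: it converts a nonlinear coupled system into two linear problems solvable by transform methods, so that exponential growth rates (and hence the ratio) fall out without any fixed-point or stability analysis.
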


Several remarks are in order regarding Theorem \ref{thm:limit_value}. First, note that $R^2(t)/
(\lambda K(t,t))$ captures the limiting \emph{correlation} between the output $\bX_t$ and the latent vector $\sqrt{N}V$, since  
\begin{align}
     \frac{\Big( \sum_{i=1}^{N} \sqrt{N} V^i  X_t^i\Big)^2}{[\sum_{i=1}^{N} (X_t^i)^2] [ N \sum_{i=1}^{N} (V^i)^2]}=\frac{(R^{N}(t))^2}{[K^N(t,t)][\sum_{i=1}^{N} (V^i)^2] }.  \nonumber 
\end{align}
and $\|V\|_2^2 \to \lambda$ almost surely under our initial conditions. 
Further, it is information theoretically impossible to recover the planted vector $V$ below the BBP threshold (i.e., when $\lambda < \sigma_{\star}/2$). Thus we focus on the interesting regime $\lambda > \sigma_{\star}/2$. In this setting, Theorem \ref{thm:limit_value}  can be interpreted as follows: first, if $\rho=0$ i.e., if the initialization has $o(1)$ correlation, then the limiting correlation stays at zero. We note that our asymptotics is \emph{different} from non-asymptotic results (in $N$) which allow vanishing (in $N$) overlap of the initialization with the truth. This is specifically the notion considered in \cite{arous2021online}, \cite{arous2020algorithmic}. We defer a detailed comparison of our results with this recent line of work to Section~\ref{sec:related}.  Our result establishes that if the diffusive noise is large ($2 \tilde{\lambda} < s_{\beta}$), the initial correlation washes off, and the correlation converges to zero in the limit $t\to \infty$.  On the other hand, if the diffusive noise is small ($2 \tilde{\lambda} > s_{\beta}$), we obtain a non-trivial correlation in the limit. While our setup is quite simple, we can precisely quantify the tradeoff between the SNR (captured by $\lambda$) and the strength of the injected noise (captured by $\beta^{-1}$) in the limiting correlation---we consider this to be one of the main contributions of our work. 
In particular, the limiting correlation increases as a function of $\lambda$, as well as $\beta^{-1}$, as one would naturally expect.

\subsection{Related literature}
\label{sec:related} 

\begin{itemize}
    \item[(i)] Dynamical Mean Field Theory and Crisanti-Horner-Sommers-Cugliandolo-Kurchan (CHSCK) equations: Dynamical Mean-Field Theory originated in the theory of spin glasses in the 80's \citep{sompolinsky1982relaxational, sompolinsky1981dynamic}. In this approach, dynamics are characterized in terms of the ``correlation" and ``response" functions. In special cases, these functions satisfy a system of integro-differential equations \citep{cugliandolo1993analytical, crisanti1992sphericalp} (we refer to such systems as CHSCK equations henceforth for simplicity). In general settings, the effective dynamics is described in terms of a non-Markovian stochastic process with long-term memory. 
    
    Recently, this framework has been employed in the statistical physics literature to study high-dimensional inference problems such as Gaussian mixture models, max-margin classification, and tensor PCA \citep{mannelli2019passed, mannelli2020marvels, cammarota2020afraid, mannelli2020thresholds, mignacco2020dynamical, sarao2021analytical, agoritsas2018out}. In these papers, versions of CHSCK equations have been proposed, and analyzed numerically to track the performance of specific algorithms. Our work differs from these earlier inquiries in some crucial ways---first, the derivations of the CHSCK equations in these works are non-rigorous, and the subsequent analysis is also numerical. In sharp contrast, our results are fully rigorous, furthermore, we characterize the precise tradeoff between the SNR and the injected noise in the Langevin algorithm. However, it should be noted that our model is, in some sense simpler than the other models described above. The main technical difference is that the spiked matrix model does not require a ``response function"---this considerably simplifies the CHSCK system, and the subsequent analysis.

    \item[(ii)] Prior rigorous results: Dynamical Mean-field Theory for mean-field spin glasses was established on rigorous footing in the works of \cite{arous2001AgingSpherical},  \cite{arous1998langevin,arous1997symmetric}, \cite{grunwald1996sanov}, \cite{grunwald1998sherrington}, among others. The CHSCK equations were formally derived in \cite{arous2004CugliandoloKurchanEquations}. While some useful information could be extracted from these equations under special settings (e.g. Langevin dynamics for matrix models \cite{arous2001AgingSpherical} and at high-temperature \cite{dembo2007limiting}), a general analysis of these equations has been quite challenging.   
    
    Recently, there has been renewed interest in this area. \cite{dembo2020dynamics}  derived the CHSCK equations for spherical spin glasses starting from disorder dependent initial conditions. \cite{dembo2021universality} examined the universality of these equations to the law of the disorder variables. \cite{dembo2021diffusions} also introduced alternative techniques for establishing universality of such dynamical algorithms. 
    
    We note that in sharp contrast to our setting, the models analyzed in this line of work correspond to ``null" models, i.e. without any planted signal. As a result, our analysis is not directly comparable to the aforementioned papers.  Despite this difference, our derivation of the CHSCK equations and its subsequent analysis relies partially on techniques from  \cite{arous2001AgingSpherical}, which studies the Langevin dynamics in matrix models  in the absence of a spike. 
    That said, we emphasize that planted models are closer to models typically observed in Statistics and Machine Learning problems. Thus, despite the simplicity of \eqref{eq:model}, Theorem \eqref{thm:limit_value} provides useful insights regarding the interplay between the SNR and the noise magnitude in determining the exact value of the limiting correlation. To the best of our knowledge, our work is the first to characterize this tradeoff for a planted model. We hope that this precise analysis would spark further investigations into Langevin-type dynamics for more complex planted models, as seen in contemporary Statistics and Machine Learning problems.

    \item[(iii)] Recent flow-based analyses: There has been considerable interest in understanding gradient descent algorithms (i.e. $\beta=\infty$ dynamics) in Statistics and Machine Learning. \cite{bodin2021model}, \cite{bodin2021rank} derive novel systems of integro-differential equations for gradient descent dynamics for certain spiked models. Although the equations are similar in spirit, there are crucial differences between these results, and the ones proved in this paper. First, for the spiked matrix model, the result in \cite{bodin2021rank} applies only to the $\beta=\infty$ case, and recovers the BBP phase transition. In contrast, we discover additional phase transitions depending on the strength of the injected noise. Further, the proof techniques are also completely different---\cite{bodin2021model,bodin2021rank} use recent advances in the random matrix literature on Green functions to derive their results.  In a different line of work, \cite{ali2019continuous}, \cite{ali2020implicit} have also used random matrix asymptotics to study the role of ``early stopping" for gradient descent algorithms for linear regression. In this problem, the solution at any fixed time is available in closed form, which considerably simplifies the analysis.   
    
   Recently, \cite{celentano2021high} derived the CHSCK equations for gradient descent (i.e. $\beta=\infty$) for empirical risk minimization. The approaches in these two papers are completely different---\cite{celentano2021high} first use Approximate Message Passing (AMP) style algorithms to study first order methods, and subsequently take a continuous time limit. Our approach, on the other hand, is grounded in random matrix theory.

   \item[(iv)] We note that our results assume a ``warm-start", i.e. an $O(1)$-correlation between the initialization and the planted signal. This situation is different from a random initialization, where the initialization has $O(1/\sqrt{N})$ correlation with the planted signal. The approach pursued in this paper does not seem suited to analyze the evolution of the correlation in this regime. Some answers are provided by the recent theory of ``bounding flows" \cite{benarous2020BoundingFlows} and the subsequent applications of this machinery to planted models \cite{arous2021online}, \cite{arous2020algorithmic}. The main restriction of this approach is that it yields ``non-sharp" answers, in constrast to the approach outlined in  our paper.   
    
\end{itemize}

\noindent 
\textbf{Outline:} The rest of the paper is structured as follows: we prove Theorem \ref{thm:limit_value}
in Section \ref{sec:proof_limit_value}, while the proof of the CHSCK equations Theorem \ref{thm:limits} is deferred to Section \ref{sec:CK_eqns}. 

\noindent \textbf{Acknowledgements} Liang was supported by NSF CAREER Grant DMS-2042473. Sur was supported by NSF DMS-2113426. Sen was supported by a Harvard Dean's Competitive Fund Award. 


\section{Proof of Theorem \ref{thm:limit_value}}
\label{sec:proof_limit_value} 

Throughout this section, for notational convenience, we use the shorthand $K(t)=K(t,t)$. Let $E(t) : = \exp\{ \int_{0}^t f'(K(s)) \dif{s} \}$. For our subsequent analysis, it will be convenient to transform the functions $R(t)$, $K(t)$ into a new set of functions $g(t)$, $h(t)$, defined as follows: 
\begin{align}
		g(t) &:=  E(t) R(t) \nonumber \\
 		h(t) &:= E^2(t) K(t). \label{eq:h_def} 
\end{align}
Observe that $(R(t))^2/K(t) = (g(t))^2/h(t)$, and thus it suffices to track $g(t)$, $h(t)$. In turn, we observe that \eqref{eq:sys1},\eqref{eq:sys2} imply that $g(t)$, $h(t)$ are uniquely specified as the solutions to the following fixed-point system. \begin{align}
		g(t) & = \mathbb{E}[\bY_0 \bu] \cdot \mathbb{E}[e^{t \bsigma}] + \mathbb{E}[\bu^2] \cdot \int_{0}^{t} g(s) \mathbb{E}[e^{(t-s) \bsigma}] \dif{s},\label{eq:gtterm}\\
		h(t) &= \beta^{-1} \int_0^t  E^2(s) \mathbb{E}[e^{(2t-2s) \bsigma}] \dif{s} + \mathbb{E}[\bY_0^2] \cdot \mathbb{E}[e^{2t \bsigma}] \\
		& + 2 \mathbb{E}[\bY_0 \bu] \cdot \int_0^t g(s) \mathbb{E}[e^{(2t-s) \bsigma}] \dif{s} + \mathbb{E}[\bu^2] \cdot \int_0^t \int_0^t g(s_1) g(s_2) \mathbb{E}[e^{(2t-s_1-s_2) \bsigma}] \dif{s_1} \dif{s_2}.  \label{eqn:most-ugly-term}
	\end{align}
Our first lemma characterizes the behavior of $g(\cdot)$. 

\begin{lemma}
\label{lemma:g_inversion} 
With $\bm{\sigma}$ drawn from the semi-circle distribution with parameter $\sigma_{\star}$, the function $g(\cdot)$ defined in \eqref{eq:h_def} satisfies the following
	\begin{align}\label{eq:g_exp}
		g(t) = \sqrt{\lambda }\rho \left\{ \Big(1- \frac{\sigma_\star^2}{4\lambda^2} \Big)_{+} \exp\big\{ (\lambda + \frac{\sigma_\star^2}{4\lambda}) t \big\}  + \frac{1}{2\pi \lambda} \int_{-\sigma_\star}^{\sigma_\star} \frac{e^{xt}\sqrt{\sigma_\star^2 - x^2}}{(\lambda+\frac{\sigma_\star^2}{4\lambda}) - x} \dd{x} \right\} \;,
	\end{align}
	where $x_{+} = \max \{ x,0\}$.
\end{lemma}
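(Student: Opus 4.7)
Equation \eqref{eq:gtterm} is a linear Volterra equation of the second kind with convolution kernel $L(t) := \mathbb{E}[e^{t\bsigma}]$. I would invert it by taking the one-sided Laplace transform to reduce it to an algebraic relation, then recover $g(t)$ by a Bromwich contour integral wrapped around the residue at a simple (would-be outlier) pole and the jump across the branch cut of $\mathcal{S}$ on $[-\sigma_\star,\sigma_\star]$. First I identify the relevant moments: since $\bsigma$ is independent of $(\bu,\bY_0)$ under $\pi^{\infty}$, the expectations in \eqref{eq:gtterm} factor, and from the initial conditions (using $\|V\|_2^2 \to \lambda$ and the orthogonal invariance of $\bG$) one has $\mathbb{E}[\bu^2] = \lambda$ and $\mathbb{E}[\bY_0\bu] = \rho\sqrt{\lambda}$, so \eqref{eq:gtterm} reduces to $g(t) = \rho\sqrt{\lambda}\,L(t) + \lambda \int_0^t L(t-s)\,g(s)\,\dd s$.

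Applying the Laplace transform and using $\int_0^\infty e^{-zt}L(t)\,\dd t = \mathcal{S}(z)$ (valid for $\mathrm{Re}(z) > \sigma_\star$) yields
\begin{align*}
\tilde{g}(z) \;=\; \frac{\rho\sqrt{\lambda}\,\mathcal{S}(z)}{1 - \lambda\,\mathcal{S}(z)} \;=\; \frac{2\rho\sqrt{\lambda}}{z + \sqrt{z^2 - \sigma_\star^2} - 2\lambda},
\end{align*}
where the second equality substitutes the explicit form of $\mathcal{S}$ for the semicircle law. Setting $\lambda\mathcal{S}(z)=1$ gives a unique solution at $z = \tilde{\lambda} = \lambda + \sigma_\star^2/(4\lambda)$, which lies strictly outside the branch cut iff $\lambda > \sigma_\star/2$; otherwise no outlier pole exists. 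Using $\mathcal{S}(\tilde{\lambda}) = 1/\lambda$ together with $\mathcal{S}'(\tilde{\lambda}) = -1/(\lambda^2 - \sigma_\star^2/4)$ gives a residue equal to $\rho\sqrt{\lambda}(1 - \sigma_\star^2/(4\lambda^2))$, producing the exponential term with rate $\tilde{\lambda}$ in the claimed formula; the positive-part truncation $(\cdot)_+$ exactly accommodates the sub-BBP regime $\lambda \leq \sigma_\star/2$.

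For the branch-cut contribution, the boundary values of $\mathcal{S}$ on $(-\sigma_\star,\sigma_\star)$ are $\mathcal{S}(x\pm i0) = 2x/\sigma_\star^2 \mp 2i\sqrt{\sigma_\star^2-x^2}/\sigma_\star^2$, and a short calculation collapses the modulus squared of the denominator to a linear factor:
\begin{align*}
\bigl(1 - \lambda\,\mathrm{Re}\,\mathcal{S}(x)\bigr)^2 + \lambda^2\bigl(\mathrm{Im}\,\mathcal{S}(x)\bigr)^2 \;=\; \frac{4\lambda}{\sigma_\star^2}\,(\tilde{\lambda} - x).
\end{align*}
Substituting this identity into the jump formula $(2\pi i)^{-1}[\tilde{g}(x-i0) - \tilde{g}(x+i0)]$ produces precisely $\dfrac{\rho\sqrt{\lambda}}{2\pi\lambda}\int_{-\sigma_\star}^{\sigma_\star} \dfrac{e^{xt}\sqrt{\sigma_\star^2-x^2}}{\tilde{\lambda}-x}\,\dd x$, matching \eqref{eq:g_exp}. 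The main technical obstacle is this algebraic cancellation, which relies crucially on the explicit semicircular form of $\mu_D$; justifying the contour deformation itself (decay of $\tilde{g}$ at infinity and integrability of the jump near $\pm\sigma_\star$) is routine, as is the uniqueness of the solution of the Volterra equation, which guarantees that the candidate recovered by inverse Laplace transform is the genuine $g$.
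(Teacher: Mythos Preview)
Your proposal is correct and follows essentially the same route as the paper: take the Laplace transform of the convolution equation \eqref{eq:gtterm} to obtain $\tilde g(z)=\dfrac{2\rho\sqrt{\lambda}}{z+\sqrt{z^2-\sigma_\star^2}-2\lambda}$, then invert via a Bromwich contour, picking up the residue at the simple pole $\tilde\lambda=\lambda+\sigma_\star^2/(4\lambda)$ and the jump across the branch cut on $[-\sigma_\star,\sigma_\star]$. The only cosmetic difference is in the algebra for the branch-cut term: the paper first rationalizes $\tilde g$ to $\dfrac{\rho}{2\sqrt{\lambda}}\,\dfrac{\sqrt{z^2-\sigma_\star^2}-(z-2\lambda)}{z-\tilde\lambda}$ and reads off both the residue and the jump directly, whereas you compute the residue via $\mathcal S'(\tilde\lambda)$ and the jump via the boundary values $\mathcal S(x\pm i0)$ together with the identity $(1-\lambda\,\mathrm{Re}\,\mathcal S)^2+\lambda^2(\mathrm{Im}\,\mathcal S)^2=\tfrac{4\lambda}{\sigma_\star^2}(\tilde\lambda-x)$; these are equivalent reorganizations of the same computation.
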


\begin{proof}[Proof of Lemma \ref{lemma:g_inversion} ]

Note that the last term in the RHS of \eqref{eq:gtterm} contains a convolution, suggesting that the Laplace transform of $g(\cdot)$ should provide useful information. For ${\rm Re}(z) > \sigma_\star$, recall that the Laplace transform of $g$ is given by
	\begin{align}\label{eq:laplace}
		\cL_g(z) := \int_{0}^\infty e^{-z t} g(t) \dif{t},
	\end{align}
	and the Stieltjes transform of $\mu_D$ is given by
	\begin{align}
		\cS(z) = \mathbb{E}\big[ \frac{1}{z - \bsigma} \big].
	\end{align}
	Evaluating Laplace transforms, Eqn.~\eqref{eq:laplace} can be transformed as 
	\begin{align}
		\cL_g(z) =  \frac{\E[\bY_0 \bu] \cdot \cS(z) }{ 1 - \E[\bu^2] \cdot \cS(z)}.
	\end{align}
	By the Fourier-Mellin formula (inverse Laplace transform), we know that
	\begin{align}\label{eq:gtformula}
		g(t) =  \frac{1}{2\pi i} \lim_{T \rightarrow +\infty} \int_{\gamma-iT}^{\gamma+iT} e^{zt} \cL_g(z) dz.
	\end{align}

\noindent 
	When $\mu_D$ follows a semi-circle law, using \eqref{eq:stieljes},  we have
	\begin{align}
		\cL_g(z) =  \frac{2\sqrt{\lambda} \rho  }{ z + \sqrt{z^2 -\sigma^2_\star} - 2\lambda } \;.
	\end{align}
	
	\begin{figure}%
	  \centering
	  \includegraphics[width=0.5\linewidth]{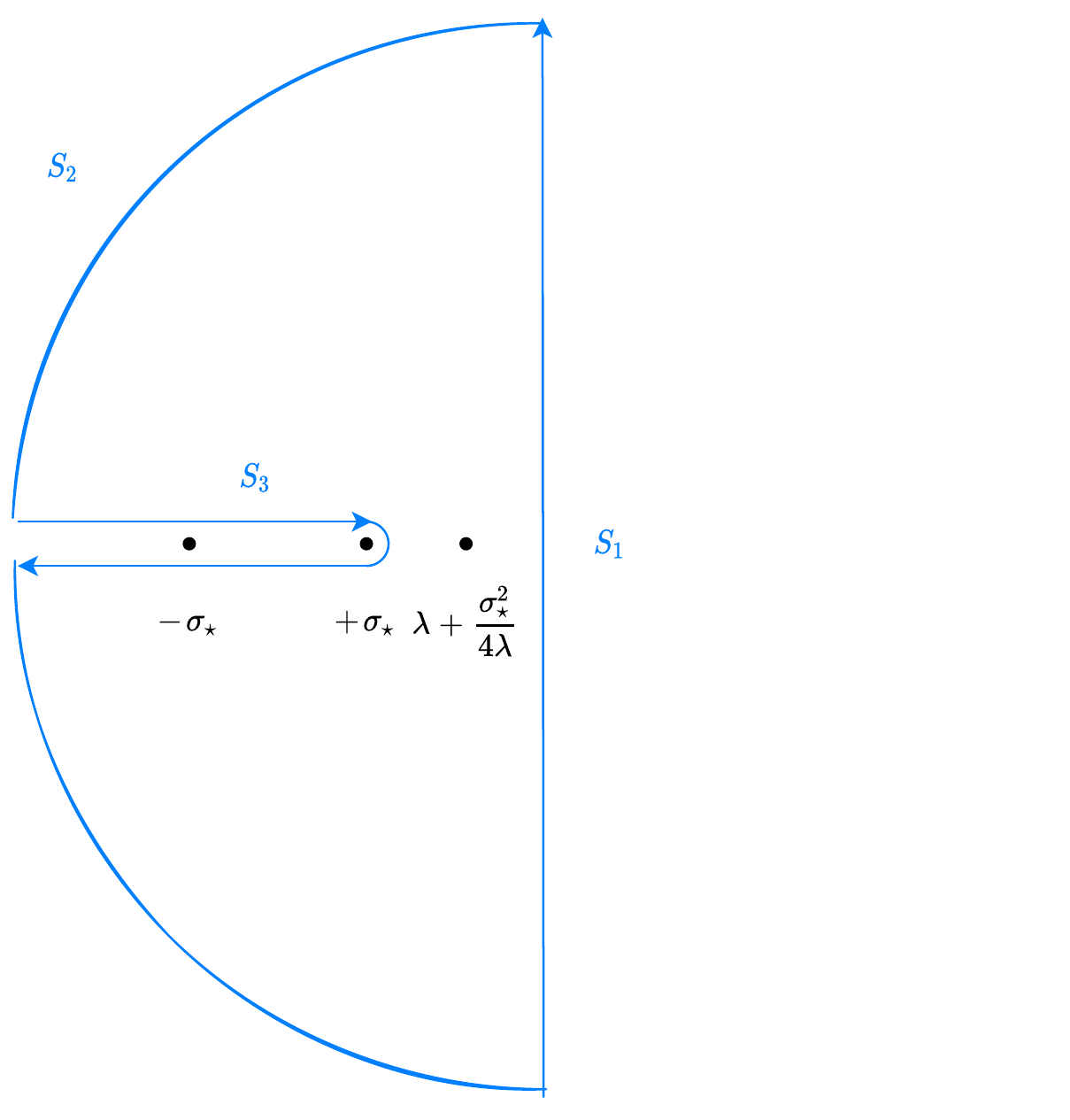}
	  \caption{The contour integral for evaluating the Fourier-Mellin formula.}
	  \label{fig:marginfig}
	\end{figure}

	Fix $\gamma \in \Reals$ with $\gamma > \lambda+ \frac{\sigma_\star^2}{4\lambda}$, we need to evaluate the Fourier-Mellin integral of the form
	\begin{align*}
		g_R(t)& :=  \frac{1}{2\pi \iu} \int_{\gamma-\iu R}^{\gamma+ \iu R} e^{zt} \cL_g (z) \dd{z} \\
		& =  \frac{1}{2\pi \iu} \int_{S_1}  e^{zt} \cL_g(z) \dd{z} \;.
	\end{align*}
	In other words, we need to know the integral along the line segment $S_1$ in Fig~\ref{fig:marginfig}.
	First, we observe that the function
	\begin{align}
		f_t(z)&:= e^{zt} \cL_g(z)  = e^{zt} \frac{2\sqrt{\lambda} \rho  }{ z + \sqrt{z^2 -\sigma^2_\star} - 2\lambda } \\
		&=  e^{zt} \frac{2\sqrt{\lambda}\rho}{4\lambda} \frac{\sqrt{z^2 -\sigma^2_\star} - (z-2\lambda) }{z - (\lambda+\frac{\sigma_\star^2}{4\lambda})}
	\end{align}
	has a simple pole at $z= \lambda+\frac{\sigma_\star^2}{4\lambda}$. Therefore using the Cauchy Residue Theorem, we have,
	\begin{align}
		\frac{1}{2\pi \iu}  \oint_{S_1 + S_2 + S_3} f_t(z) \dd{z} = \sqrt{\lambda} \rho  \cdot \Big(1-\frac{\sigma_\star^2}{4\lambda^2} \Big)_+  e^{(\lambda+\frac{\sigma_\star^2}{4\lambda})t}  \;.
	\end{align}
	
	Second, we evaluate the integral over the arc $S_2^{\epsilon}$. Parametrize the $z(\theta) \in S_2$ by $z(\theta) = \gamma - (R \cos\theta + \iu R \sin\theta)$ with $\theta \in [\frac{\pi}{2}+\epsilon, \frac{3\pi}{2}-\epsilon]$, therefore
	\begin{align}
		\left| \frac{1}{2\pi \iu} \int_{S_2^{\epsilon}} f_t(z) \dd{z} \right| \leq \frac{R}{2} e^{t(\gamma - R\cos(\epsilon))} \sup_{\theta \in [\frac{\pi}{2}+\epsilon, \frac{3\pi}{2}-\epsilon]} \frac{2\sqrt{\lambda}\rho}{| \gamma+R e^{\iu \theta} + \sqrt{(\gamma+R e^{\iu \theta})^2 - \sigma_\star^2} - 2\lambda |} 
	\end{align}
	and thus for any fixed $\epsilon$
	\begin{align}
		\lim_{R\rightarrow \infty} \frac{1}{2\pi \iu} \int_{S_2^{\epsilon}} f_t(z) \dd{z} = 0 \;.
	\end{align}
	For the part of the integral $S_2\backslash S_2^\epsilon$, one can show that
	\begin{align}
		\left| \frac{1}{2\pi \iu} \int_{S_2\backslash S_2^\epsilon} f_t(z) \dd{z} \right| \leq \epsilon \frac{R}{\pi} e^{t\gamma} \sup_{\theta \in [\frac{\pi}{2}, \frac{\pi}{2}+\epsilon]}  \frac{2\sqrt{\lambda}\rho}{| \gamma+R e^{\iu \theta} + \sqrt{(\gamma+R e^{\iu \theta})^2 - \sigma_\star^2} - 2\lambda |}
	\end{align}
	and thus with some universal constant $C(\lambda, \sigma_\star, \rho, \gamma)$
	\begin{align}
		\lim_{R\rightarrow \infty} \frac{1}{2\pi \iu} \int_{S_2^{\epsilon}} f_t(z) \dd{z} = C(\lambda, \sigma_\star, \rho, \gamma) \cdot \epsilon \;.
	\end{align}
	Putting these estimates together and sending $\epsilon \rightarrow 0$, we have shown 
	\begin{align}
		\lim_{R\rightarrow \infty} \frac{1}{2\pi \iu} \int_{S_2} f_t(z) \dd{z} = 0
	\end{align}

	Third, we evaluate the integral over the segment $S_3$ (corresponding to the branch point of $\sqrt{z^2 - \sigma_\star^2}$), with $z = x+ \iu \epsilon$ where $x \in [-\sigma_\star, \sigma_\star]$. We consider this since the function $z \mapsto \sqrt{z}$ is a well-defined single-valued function only on $z \in \mathbb{C} \backslash (-\infty, 0)$. One can show that
	\begin{align}
		&\lim_{\epsilon \rightarrow 0} \frac{1}{2\pi \iu} \int_{S_3^{\epsilon}} f_t(z) \dd{z} \\
		&= \frac{1}{2\pi \iu} \int_{-\sigma_\star}^{\sigma_\star} e^{x t} \frac{2\sqrt{\lambda}\rho}{(x-2\lambda)+ \iu\sqrt{\sigma_\star^2 - x^2} } \dd{x} - \frac{1}{2\pi \iu} \int_{-\sigma_\star}^{\sigma_\star} e^{x t} \frac{2\sqrt{\lambda}\rho}{(x-2\lambda)- \iu\sqrt{\sigma_\star^2 - x^2} } \dd{x} \\
		&= \frac{2\sqrt{\lambda}\rho}{2\pi \iu} \int_{-\sigma_\star}^{\sigma_\star} e^{xt}\frac{-2i \sqrt{\sigma_\star^2 - x^2}}{-4\lambda x + 4\lambda^2+\sigma_\star^2} \dd{x} \\
		&= - \sqrt{\lambda}\rho  \frac{1}{2\pi \lambda} \int_{-\sigma_\star}^{\sigma_\star} e^{xt}  \frac{\sqrt{\sigma_\star^2 - x^2}}{(\lambda+\frac{\sigma_\star^2}{4\lambda}) - x} \dd{x} \;.
	\end{align}

\noindent
Finally, we integrate over the semi-circular arc in $S_3$. Parametrizing $z= \sigma_{\star} + \epsilon e^{i\theta}$, we have, $\dif{z}= \epsilon i e^{i\theta} \dif{\theta}$. In turn, the integral reduces to 
\begin{align}
    \int_{\pi/2}^{-\pi/2} e^{(\sigma_{\star} + \epsilon e^{i \theta})t} \frac{2\sqrt{\lambda} \rho}{\sigma_{\star} + \epsilon e^{i \theta} + \sqrt{(\sigma_{\star} + \epsilon e^{i\theta} )^2 - \sigma_{\star}^2}- 2\lambda } \epsilon i e^{i \theta} \dif{\theta}. \nonumber  
\end{align}
We note that this integral converges to zero as $\epsilon \to 0$. Putting three pieces together, we conclude the proof.
\end{proof}

\noindent 
We next turn our attention to $h$. Recall $E(t) = \exp\{ \int_0^{t} K(s) \mathrm{d}s\}$, and set $F(t) = E^2(t)$. Differentiating, we obtain that $F'(t) = 2 K(t) F(t) = 2 h(t)$, defined in \eqref{eq:h_def}. Coupled with \eqref{eqn:most-ugly-term}, we have, $F$ satisfies the integro-differential equation, 
\begin{align}
    F'(t) &= 2 \beta^{-1} \int_0^{t} F(s) \mathbb{E}\Big[ \exp(2(t-s)\bm{\sigma}) \Big] \mathrm{d}s + \Phi(t), \nonumber \\
    \Phi(t) &= 2\mathbb{E}[\bm{Y_0}^2] \mathbb{E}[\exp(2t \bm{\sigma})] + 4\mathbb{E}[\bm{Y_0} \bm{u}] \int_0^t g(s) \mathbb{E}[\exp(\bm{\sigma}(2t-s))] \mathrm{d}s \label{eq:F_exp}  \\
    +& 2\mathbb{E}[\bm{u}^2] \int_0^t \int_0^t g(s_1) g(s_2) \mathbb{E}\Big[\exp\Big(\bm{\sigma} (2 t - s_1 - s_2 )\Big) \Big] \mathrm{d}s_1 \mathrm{d}s_2. \nonumber 
\end{align}

\begin{lemma}
\label{lem:h_laplace}
The function $h$ has Laplace transform \begin{align}
    \mathcal{L}_h(z) = \frac{1}{2} \Big[\frac{z \mathcal{L}_{\Phi}(z) +  \beta^{-1} \mathcal{S
    }(z/2) }{z -  \beta^{-1} \mathcal{S}(z/2)} \Big]. \nonumber
\end{align}
\end{lemma}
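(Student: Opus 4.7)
The plan is to take the Laplace transform of the integro-differential equation~\eqref{eq:F_exp} and solve the resulting algebraic equation for $\mathcal{L}_h(z)$. The integral in \eqref{eq:F_exp} is a convolution of $F$ with the kernel $\psi(u):= \mathbb{E}[\exp(2u\bm{\sigma})]$, and the identity $F'(t) = 2h(t)$ together with $F(0) = E(0)^2 = 1$ lets me link $\mathcal{L}_F$ directly to $\mathcal{L}_h$. All of this should go through once I verify that $F$ (and hence $h, \Phi$) is of exponential order on some right half-plane, so that the Laplace transforms exist and the convolution theorem applies.

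First, I would establish growth bounds. Since $\mu_D$ has compact support in $[-\sigma_\star,\sigma_\star]$, one has $\psi(u)\leq e^{2\sigma_\star u}$, and the explicit form of $g$ given by Lemma~\ref{lemma:g_inversion} grows at most like $e^{\tilde\lambda t}$ with $\tilde\lambda = \lambda + \sigma_\star^2/(4\lambda)$. Consequently, $\Phi(t)$ in~\eqref{eq:F_exp} is of exponential order, and a standard Gronwall argument applied to~\eqref{eq:F_exp} shows that $F(t)$ (and hence $h(t)$) is bounded by $C e^{\kappa t}$ for some $\kappa > 0$ depending on $\beta, \sigma_\star, \lambda$. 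Therefore $\mathcal{L}_F$, $\mathcal{L}_h$, and $\mathcal{L}_\Phi$ are all well-defined analytic functions on $\{\mathrm{Re}(z) > z_0\}$ for $z_0$ sufficiently large.

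Next, for $\mathrm{Re}(z) > 2\sigma_\star$, I compute
\begin{align*}
\mathcal{L}_\psi(z) = \int_0^\infty e^{-zu}\mathbb{E}[e^{2u\bm{\sigma}}]\,du = \mathbb{E}\Big[\frac{1}{z - 2\bm{\sigma}}\Big] = \frac{1}{2}\,\mathcal{S}(z/2),
\end{align*}
by Fubini (justified by the compact support of $\mu_D$ and the growth condition on $z$) and the definition~\eqref{eq:stieljes}. Also, since $F'(t) = 2h(t)$ and $F(0) = 1$, integration by parts (or the standard derivative rule for Laplace transforms) gives
\begin{align*}
2 \mathcal{L}_h(z) = \mathcal{L}_{F'}(z) = z\,\mathcal{L}_F(z) - 1, \qquad \text{so} \qquad \mathcal{L}_F(z) = \frac{2\mathcal{L}_h(z) + 1}{z}.
\end{align*}

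Now I apply the Laplace transform to both sides of~\eqref{eq:F_exp}. Using the convolution theorem on the integral term,
\begin{align*}
2\mathcal{L}_h(z) = 2\beta^{-1}\,\mathcal{L}_F(z)\,\mathcal{L}_\psi(z) + \mathcal{L}_\Phi(z) = \beta^{-1}\,\mathcal{S}(z/2)\,\mathcal{L}_F(z) + \mathcal{L}_\Phi(z).
\end{align*}
Substituting the expression for $\mathcal{L}_F$ and multiplying through by $z$,
\begin{align*}
2z\,\mathcal{L}_h(z) - 2\beta^{-1}\mathcal{S}(z/2)\,\mathcal{L}_h(z) = \beta^{-1}\mathcal{S}(z/2) + z\,\mathcal{L}_\Phi(z),
\end{align*}
and solving for $\mathcal{L}_h(z)$ yields the claimed formula. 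The only real obstacle, which is mild, is rigorously justifying the exchange of Laplace transform and convolution and ensuring existence on a common right half-plane; this is handled by the exponential-order estimate from the Gronwall step above.
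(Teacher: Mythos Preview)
Your proof is correct and follows essentially the same approach as the paper: take Laplace transforms of \eqref{eq:F_exp}, use the convolution theorem on the integral term, compute $\mathcal{L}_\psi(z)=\tfrac{1}{2}\mathcal{S}(z/2)$, and relate $\mathcal{L}_F$ to $\mathcal{L}_h$ via $F'(t)=2h(t)$ with $F(0)=1$. Your treatment is in fact slightly more careful than the paper's, since you explicitly justify the exponential-order growth (via Gronwall) needed for the Laplace transforms and the convolution identity to be valid, whereas the paper simply carries out the formal computation.
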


\begin{proof}[Proof of Lemma \ref{lem:h_laplace}]
Taking Laplace transforms in \eqref{eq:F_exp}, we obtain, 
\begin{align}
    z \mathcal{L}_{F}(z)-1 = 2 \beta^{-1} \mathcal{L}_F(z) \mathcal{L}_M(z) + \mathcal{L}_{\Phi}(z), \nonumber 
\end{align}
where we set $M(t)= \mathbb{E}[\exp(2t \sigma)]$. Transposing, we obtain, 
\begin{align}
    \mathcal{L}_{F}(z) = \frac{1+ \mathcal{L}_{\Phi}(z)}{z- 2 \beta^{-1} \mathcal{L}_{M}(z)}. \nonumber 
\end{align}
Recall that $F'(t) = 2 h(t)$, and thus $\mathcal{L}_{h}(z) = \frac{1}{2}[z\mathcal{L}_F(z) -1]$. This allows us to obtain the Laplace transform of $h$. Finally we note that by direct computation, $\mathcal{L}_{M}(z) = \frac{1}{2} \mathcal{S}(z/2)$. This completes the calculation. 

\end{proof}

We set $\tilde{\lambda}=\lambda + \frac{\sigma_{\star}^2}{4\lambda}$.  Let $s_{\beta}$ denote the largest real solution to the equation $z= \beta^{-1} \mathcal{S}(z/2)$.
\begin{lemma}
\label{lem:phi_inversion_asymp} 
If $2\tilde{\lambda}> s_\beta$, we have that
\begin{align}
    \lim_{z \to 0} \Big| z\mathcal{L}_{\Phi}(2 \tilde{\lambda} + z) - 2\lambda^2 \rho^2 \Big(1 - \frac{\sigma_{\star}^2}{4\lambda^2} \Big)_{+}^2\mathbb{E}\left[\frac{1}{(\tilde{\lambda}-\bm{\sigma})^2}\right] \Big| =0.\nonumber 
\end{align}
\end{lemma}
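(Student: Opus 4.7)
The plan is to decompose $\Phi = \Phi_1 + \Phi_2 + \Phi_3$ according to the three summands on the right-hand side of \eqref{eq:F_exp}, analyze each piece's Laplace transform near $z = 2\tilde{\lambda}$, and show that only $\Phi_3$ contributes a simple pole there with residue equal to the target constant. A standing ingredient throughout is the identity $\tilde{\lambda} = \lambda + \sigma_\star^2/(4\lambda) > \sigma_\star$, which is equivalent by AM-GM to the hypothesis $\lambda > \sigma_\star/2$; this spectral gap provides uniform separation of exponential rates and makes the integrals against $\mu_D$ finite.

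For $\Phi_1$, $\mathcal{L}_{\Phi_1}(z) = \mathbb{E}[\bm{Y}_0^2]\,\mathcal{S}(z/2)$ is analytic on $(2\sigma_\star, \infty)$, hence bounded near $z = 2\tilde{\lambda}$. For $\Phi_2$, using $\int_0^t g(s) e^{\bm{\sigma}(2t-s)}\,ds = e^{\bm{\sigma} t}(g \ast e^{\bm{\sigma}\, \bullet})(t)$ and Fubini,
\begin{align*}
\mathcal{L}_{\Phi_2}(z) \;=\; 4\,\mathbb{E}[\bm{Y}_0 \bm{u}]\, \mathbb{E}_{\bm{\sigma}} \!\left[ \frac{\mathcal{L}_g(z - \bm{\sigma})}{z - 2\bm{\sigma}} \right].
\end{align*}
Since the only real pole of $\mathcal{L}_g$ is at $\tilde{\lambda}$, and for $z$ near $2\tilde{\lambda}$ and $\bm{\sigma} \in [-\sigma_\star, \sigma_\star]$ we have $z - \bm{\sigma} \geq 2\tilde{\lambda} - \sigma_\star > \tilde{\lambda}$ and $z - 2\bm{\sigma} \geq 2(\tilde{\lambda} - \sigma_\star) > 0$, the integrand is uniformly bounded. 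Therefore $z\mathcal{L}_{\Phi_j}(2\tilde{\lambda} + z) \to 0$ for $j = 1, 2$.

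The substantive term is $\Phi_3$. The key algebraic simplification is the factorization
\begin{align*}
\int_0^t \int_0^t g(s_1) g(s_2) e^{\bm{\sigma}(2t - s_1 - s_2)} \, ds_1\, ds_2 \;=\; q_{\bm{\sigma}}(t)^2,\qquad q_{\bm{\sigma}}(t) := \int_0^t g(s) e^{\bm{\sigma}(t - s)}\, ds.
\end{align*}
From Lemma \ref{lemma:g_inversion}, I decompose $g(t) = A e^{\tilde{\lambda} t} + r(t)$, where $A := \sqrt{\lambda}\,\rho\, (1 - \sigma_\star^2 / (4\lambda^2))_+$ and a crude bound on the semicircle integral gives $|r(t)| \leq C e^{\sigma_\star t}$. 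An elementary convolution then yields
\begin{align*}
q_{\bm{\sigma}}(t) \;=\; \frac{A}{\tilde{\lambda} - \bm{\sigma}}\, e^{\tilde{\lambda} t} \;+\; O\!\left( t\, e^{\sigma_\star t} \right),
\end{align*}
uniformly in $\bm{\sigma} \in [-\sigma_\star, \sigma_\star]$. Squaring, using $\mathbb{E}[\bm{u}^2] = \lambda$, and taking expectation over $\bm{\sigma}$ via dominated convergence (whose integrability $\mathbb{E}[(\tilde{\lambda} - \bm{\sigma})^{-2}] < \infty$ holds precisely because $\tilde{\lambda} > \sigma_\star$),
\begin{align*}
e^{-2\tilde{\lambda} t}\, \Phi_3(t) \;\longrightarrow\; 2\lambda^2 \rho^2 \Big( 1 - \frac{\sigma_\star^2}{4\lambda^2} \Big)_+^2 \, \mathbb{E}\!\left[ \frac{1}{(\tilde{\lambda} - \bm{\sigma})^2} \right].
\end{align*}

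To convert this asymptotic into the claim, I invoke a standard Abelian argument: for bounded $F : [0, \infty) \to \mathbb{R}$ with $F(t) \to C$, the substitution $u = zt$ gives $z \int_0^\infty e^{-z t} F(t)\, dt = \int_0^\infty e^{-u} F(u/z)\, du \to C$ as $z \to 0^+$, by dominated convergence with majorant $\|F\|_\infty e^{-u}$. Applied to $F(t) := \Phi_3(t) e^{-2\tilde{\lambda} t}$ (whose boundedness follows from the crude pointwise estimate $|q_{\bm{\sigma}}(t)| \leq C' e^{\tilde{\lambda} t}/(\tilde{\lambda}-\bm{\sigma})$ and integrability of $(\tilde{\lambda}-\bm{\sigma})^{-2}$), this completes the proof. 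The main technical obstacle is uniform-in-$\bm{\sigma}$ control of the remainder terms in $q_{\bm{\sigma}}(t)$ as $\bm{\sigma}$ approaches the upper spectral edge, but this is resolved cleanly by the gap $\tilde{\lambda} - \sigma_\star > 0$ ensured by the hypothesis $\lambda > \sigma_\star/2$.
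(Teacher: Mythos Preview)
Your proof is correct and follows the same high-level decomposition $\Phi=\Phi_1+\Phi_2+\Phi_3$ (the paper's $T_1,T_2,T_3$), but your treatment of each piece is organized differently from the paper's. For $\Phi_1$ and $\Phi_2$ the paper bounds the time-integrals directly, whereas you compute the Laplace transforms in closed form ($\mathcal{L}_{\Phi_1}(z)=\E[\bY_0^2]\,\mathcal S(z/2)$ and $\mathcal{L}_{\Phi_2}(z)=4\E[\bY_0\bu]\,\E_{\bsigma}[\mathcal L_g(z-\bsigma)/(z-2\bsigma)]$) and read off boundedness from the pole/branch structure; your route is shorter. For $\Phi_3$ the paper stays in the Laplace domain: it expands $g(s_1)g(s_2)$ using Lemma~\ref{lemma:g_inversion}, evaluates the dominant triple integral explicitly to extract the $1/z$ singularity, and bounds the three cross/remainder integrals one by one. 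You instead exploit the factorization $\iint=q_{\bsigma}(t)^2$, establish the \emph{time-domain} asymptotic $e^{-2\tilde\lambda t}\Phi_3(t)\to 2\lambda A^2\,\E[(\tilde\lambda-\bsigma)^{-2}]$, and then transfer to the Laplace side by an Abelian argument. This is more conceptual and avoids the case-by-case remainder bounds, at the cost of being slightly less explicit. One minor point: your Abelian step is written for real $z\to 0^+$, while the lemma (and its use in the proof of Theorem~\ref{thm:limit_value}) takes $z\to 0$ along a sector; the same dominated-convergence argument works there since $|e^{-zt}|=e^{-(\operatorname{Re} z)t}$ and $\operatorname{Re} z\geq |z|\sin\delta$ in a sector $|\arg z|\leq \pi/2-\delta$.
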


\begin{proof}
By definition, 
\begin{align}
    \mathcal{L}_{\Phi}(2 \tilde{\lambda} + z) &= \int_0^{\infty} \exp(- (2 \tilde{\lambda} + z) t) \Phi(t) \mathrm{d}t := T_1 + T_2 + T_3, \nonumber \\
    T_1 &:= 2\mathbb{E}[\bm{Y_0}^2] \int_0^{\infty} \exp(- (2 \tilde{\lambda} + z) t) \mathbb{E}[\exp(2t \bm{\sigma})] \mathrm{d}t, \nonumber \\
    T_2 &:= 4\mathbb{E}[\bm{Y_0} \bm{u}] \int_0^{\infty} \int_0^t \exp(- (2 \tilde{\lambda} + z) t) g(s) \mathbb{E}[\exp(\bm{\sigma}(2t-s))] \mathrm{d}s \mathrm{d}t, \nonumber \\
    T_3&:= 2\mathbb{E}[\bm{u}^2] \int_0^{\infty} \int_0^t \int_0^t \exp(- (2 \tilde{\lambda} + z) t) g(s_1) g(s_2) \mathbb{E}\Big[\exp\Big(\bm{\sigma}(2t-s_1 - s_2) \Big) \Big] \mathrm{d}s_1 \mathrm{d}s_2 \mathrm{d}t. \nonumber 
\end{align}
We first claim that there exists a constant $C>0$ such that 
\begin{align}
    \Big| \int_0^{\infty} \exp(- (2 \tilde{\lambda} + z) t) \mathbb{E}[\exp(2t \bm{\sigma})] \mathrm{d}t \Big|  \leq C, \,\,\,\, \Big|\int_0^{\infty} \int_0^t \exp(- (2 \tilde{\lambda} + z) t) g(s) \mathbb{E}[\exp(\bm{\sigma}(2t-s))] \mathrm{d}s \mathrm{d}t \Big|\leq C. \nonumber 
\end{align}
We establish each bound in turn. Set $z= z_1 + i z_2$ with $z_1, z_2 \in \Reals$, and note that $z_1, z_2 \to 0$. Choose $z_1$ small enough so that $2\tilde{\lambda} + z_1 > 4 \sigma_{\star}$ , since $2\tilde{\lambda} > 4 \sigma_\star$ by assumption. Thus we have,  
\begin{align}
    \Big| \int_0^{\infty} \exp(- (2 \tilde{\lambda} + z) t) \mathbb{E}[\exp(2t \bm{\sigma})] \mathrm{d}t \Big| &\leq \int_0^{\infty} \exp(- (2 \tilde{\lambda} + z_1) t) \mathbb{E}[\exp(2t \bm{\sigma})] \mathrm{d}t   \nonumber \\
    &\leq \int_0^{\infty} \exp\Big(- (2 \tilde{\lambda} - 2\sigma_{\star} + z_1) t \Big) \mathrm{d}t \nonumber \\
    &\leq \frac{1}{2\tilde{\lambda} -2\sigma_{\star} + z_1}. \nonumber 
\end{align}
This remains bounded as $z_1\to 0$. To analyze the second term, first observe from Lemma~\ref{lemma:g_inversion} that 
\begin{align}
    \lim_{t \to \infty} \exp(- \tilde{\lambda} t) g(t) = \sqrt{\lambda} \rho \Big( 1- \frac{\sigma_{\star}^2}{4\lambda^2}\Big)_{+}. \nonumber 
\end{align}

Thus for any $\varepsilon>0$, there exists $t_0$ large enough such that for all $t \geq t_0$, 
\begin{align}
    (1 - \varepsilon) \sqrt{\lambda} \rho \Big( 1- \frac{\sigma_{\star}^2}{4\lambda^2}\Big)_{+} \exp(\tilde{\lambda} t)\leq g(t) \leq (1 + \varepsilon) \sqrt{\lambda} \rho \Big( 1- \frac{\sigma_{\star}^2}{4\lambda^2}\Big)_{+} \exp(\tilde{\lambda} t). \nonumber
\end{align}
Then we have, setting $z = z_1 + i z_2$ with $z_1$ small enough, 
\begin{align}
   &\Big|\int_0^{\infty} \int_0^t \exp(- (2 \tilde{\lambda} + z) t) g(s) \mathbb{E}[\exp(\bm{\sigma}(2t-s))] \mathrm{d}s \mathrm{d}t \Big| \nonumber\\
   &\leq \int_0^{\infty} \int_0^t \exp(- (2 \tilde{\lambda} + z_1) t) g(s) \mathbb{E}[\exp(\bm{\sigma}(2t-s))] \mathrm{d}s \mathrm{d}t \nonumber \\
   &\leq C_1 + \int_{t_0}^{\infty} \Big[ \int_0^{t_0} \exp(- (2 \tilde{\lambda} + z_1) t) g(s) \mathbb{E}[\exp(\bm{\sigma}(2t-s))] \mathrm{d}s +  \int_{t_0}^{t} \exp(- (2 \tilde{\lambda} + z_1) t) g(s) \mathbb{E}[\exp(\bm{\sigma}(2t-s))] \mathrm{d}s \Big] \mathrm{d}t. \nonumber 
\end{align}
The control of this term is complete once we control the two integrals above. To this end, 
\begin{align}
    &\int_{t_0}^{\infty} \int_0^{t_0} \exp\Big(- (2 \tilde{\lambda} + z_1) t \Big) g(s) \mathbb{E}[\exp(\bm{\sigma}(2t-s))] \mathrm{d}s \mathrm{d}t \nonumber \\
    &\leq \Big(\max_{0\leq s \leq t_0} g(s) \Big) \int_{t_0}^{\infty} \exp\Big(- (2\tilde{\lambda} + z_1 -2\sigma_{\star}) t \Big) \mathrm{d}t \nonumber \\
    &\leq \Big(\max_{0\leq s \leq t_0} g(s) \Big) \frac{\exp(- t_0 (2 \tilde{\lambda} + z_1 - 2 \sigma_{\star}))}{2 \tilde{\lambda} + z_1 - 2 \sigma_{\star}} \nonumber 
\end{align}
which remains bounded as $z_1 \to 0$. Finally, 
\begin{align}
    &\int_{t_0}^{\infty} \int_{t_0}^{t} \exp\Big(- (2 \tilde{\lambda} + z_1) t) g(s) \mathbb{E}[\exp(\bm{\sigma}(2t-s))]  \Big) \mathrm{d}s\mathrm{d}t \nonumber \\
    &\leq (1 + \varepsilon) \sqrt{\lambda} \rho \Big( 1- \frac{\sigma_{\star}^2}{4\lambda^2} \Big)_{+} \int_{t_0}^{\infty} \int_{t_0}^{t} \exp\Big(- (2 \tilde{\lambda} + z_1) t) \Big) \exp(\tilde{\lambda}s+\sigma_{\star}(2t-s))   \mathrm{d}s\mathrm{d}t \nonumber \\
    &\leq \frac{(1+\varepsilon)}{\tilde{\lambda} -\sigma_{\star}} \sqrt{\lambda} \rho \Big( 1- \frac{\sigma_{\star}^2}{4\lambda^2} \Big)_{+} \int_{t_0}^{\infty} \exp\Big( - ( \tilde{\lambda} + z_1 - \sigma_{\star})t) \mathrm{d}t \nonumber 
\end{align}
which remains bounded as $z_1 \to 0$. 

Finally, we turn to the term $T_3$. Recall that, using \eqref{eq:g_exp}, $g(s)= \sqrt{\lambda} \rho (1- \frac{\sigma_{\star}^2}{4\lambda^2})_{+} \exp(\tilde{\lambda}s) + g_1(s)$, where 

\[g_1(s) = \frac{\sigma_{\star}^2}{4\lambda}\mathbb{E}_{x \sim \mu_D}\left[ \frac{e^{xs}}{\tilde{\lambda} - x}\right] .\]

Further recalling that $\mathbb{E}[\bm{u}^2]=\lambda$, we have
\begin{align}\label{eq:T3}
    T_3 =& 2\mathbb{E}[\bm{u}^2] \int_0^{\infty} \int_0^{t} \int_0^{t} \exp(- (2 \tilde{\lambda} + z) t) g(s_1) g(s_2) \mathbb{E}\Big[\exp\Big(\bm{\sigma}(2t-s_1 - s_2) \Big) \Big] \mathrm{d}s_1 \mathrm{d}s_2 \mathrm{d}t \nonumber\\
    &=  2\lambda^2 \rho^2 \Big(1 - \frac{\sigma_{\star}^2}{4\lambda^2} \Big)_{+}^2 \int_0^{\infty} \int_0^{t} \int_0^{t} \exp(- (2 \tilde{\lambda} + z) t + \tilde{\lambda}(s_1 + s_2))  \mathbb{E}\Big[\exp\Big(\bm{\sigma}(2t-s_1 - s_2) \Big) \Big] \mathrm{d}s_1 \mathrm{d}s_2 \mathrm{d}t + \mathrm{Rem} 
\end{align}
where $\mathrm{Rem}$ is the remainder term which we will later show to be bounded as $z\to 0$. Computing the first term, we obtain, 
\begin{align}\label{eq:rem1}
    &\int_0^{\infty} \int_0^{t} \int_0^{t} \exp(- (2 \tilde{\lambda} + z) t + \tilde{\lambda}(s_1 + s_2))  \mathbb{E}\Big[\exp\Big(\bm{\sigma}(2t-s_1 - s_2) \Big) \Big] \mathrm{d}s_1 \mathrm{d}s_2 \mathrm{d}t \nonumber \\
    &=\mathbb{E}\Big[\int_0^{\infty} \int_0^{t} \int_0^{t} \exp\Big(- (2 \tilde{\lambda} +z - 2 \bm{\sigma} )t + (\tilde{\lambda}- \bm{\sigma} ) s_1 + (\tilde{\lambda}- \bm{\sigma} ) s_2 \Big) \mathrm{d}s_1 \mathrm{d}s_2 \mathrm{d}t \Big] \nonumber \\
    &= \mathbb{E}\Big[ \int_0^{\infty} \exp(-(2\tilde{\lambda}+z - 2 \bm{\sigma})t) \frac{\Big( \exp(t(\tilde{\lambda} - \bm{\sigma})) - 1 \Big)^2}{(\tilde{\lambda}-\bm{\sigma})^2} \mathrm{d}t\Big] \nonumber \\
    &= \mathbb{E}\Big[ \int_0^{\infty} \exp(-(2\tilde{\lambda}+z - 2 \bm{\sigma})t) \frac{\Big( \exp(2 t(\tilde{\lambda} - \bm{\sigma})) - 2 \exp(t(\tilde{\lambda} - \bm{\sigma}))  +1    \Big)}{(\tilde{\lambda}-\bm{\sigma})^2} \mathrm{d}t \Big] \nonumber \\
    &= \frac{1}{z}\mathbb{E}\left[\frac{1}{(\tilde{\lambda}-\bm{\sigma})^2}\right]+ \mathrm{Rem}_1.
\end{align}

Note that $\mathrm{Rem}_1$ involves two terms and remains bounded when $z\to 0$ since

\begin{align*}
 \mathrm{Rem}_1 & = -2\mathbb{E} \left[\frac{1}{(\tilde{\lambda}-\bm{\sigma})^2} \int_{0}^{\infty}\exp((-\tilde{\lambda}-z+ \bm{\sigma})t)\right]  + \mathbb{E} \left[\frac{1}{(\tilde{\lambda}-\bm{\sigma})^2} \int_{0}^{\infty}\exp((-2\tilde{\lambda}-z+2\bm{\sigma})t)\right]   \\
 &= -2\mathbb{E} \left[\frac{1}{(\tilde{\lambda}-\bm{\sigma})^2(\tilde{\lambda}+z - \bm{\sigma})}  \right] + \mathbb{E} \left[\frac{1}{(\tilde{\lambda}-\bm{\sigma})^2(2\tilde{\lambda}+z -2 \bm{\sigma})}  \right]
\end{align*}
Thus, the proof is complete on establishing that $\mathrm{Rem}$ remains bounded as $z \rightarrow 0$.

We note that $\mathrm{Rem}$ may be expressed as the sum of three terms, denoted as $\mathrm{Rem}_{T_j}, j=1,\hdots 3$, where 

\begin{align*}
    \mathrm{Rem}_{T_1} & = 2\sqrt{\lambda}\rho(1-\frac{\sigma_{\star}^2}{4\lambda^2})_{+}\frac{\sigma_{\star}^2}{4\lambda} \int_0^{\infty} \int_0^{t} \int_0^{t} \exp(- (2 \tilde{\lambda} + z) t+\tilde{\lambda}s_1) \mathbb{E}_{x \sim \mu_D}\left[\frac{\exp(xs_2)}{\tilde{\lambda} -x}  \right] \mathbb{E}\Big[\exp\Big(\bm{\sigma}(2t-s_1 - s_2) \Big) \Big] \mathrm{d}s_1 \mathrm{d}s_2 \mathrm{d}t,
    \end{align*}
$\mathrm{Rem}_{T_2}$ is the same as above except the roles of $s_1$ and $s_2$ are reversed, and the third term is defined as below:

\begin{align*}
    \mathrm{Rem}_{T_3} =  \frac{2\sigma_{\star}^4}{16\lambda}\int_0^{\infty} \int_0^{t} \int_0^{t}\exp(- (2 \tilde{\lambda} + z) t\mathbb{E}_{x_1 \sim \mu_D}\left[\frac{\exp(x_1s_1)}{(\tilde{\lambda}-x_1)}\right] \mathbb{E}_{x_2 \sim \mu_D}\left[\frac{\exp(x_2s_2)}{(\tilde{\lambda}-x_2)}\right]\mathbb{E}\Big[\exp\Big(\bm{\sigma}(2t-s_1 - s_2) \Big) \Big] \mathrm{d}s_1 \mathrm{d}s_2 \mathrm{d}t
\end{align*}
  
  Now the support of $x , \bm{\sigma}$ is upper bounded by $\sigma_{\star}$ and $2t -s_1 -s_2 \geq 0, s_1,s_2 \leq t$ in the range of integration. Thus, we may upper bound $\mathrm{Rem}_{T_1}$ by
  
    \begin{align*}
    Rem_{T_1} & \leq  2\sqrt{\lambda}\rho(1-\frac{\sigma_{\star}^2}{4\lambda^2})_{+}\mathbb{E}_{(x,\bm{\sigma}) \sim \mu_D^{\otimes 2}}\left[\frac{1}{\tilde{\lambda}-x} \int_0^{\infty} \int_0^{t} \int_0^{t}\exp(- (2 \tilde{\lambda} + z-2 \sigma_{\star}) t+(\tilde{\lambda}-\sigma_{\star})t) ds_1 ds_2 dt\right] \\
    & = 2\sqrt{\lambda}\rho(1-\frac{\sigma_{\star}^2}{4\lambda^2})_{+}\mathbb{E}_{x \sim \mu_D}\left[\frac{1}{(\tilde{\lambda}-x)} \int_0^{\infty} t^2\exp(- ( \tilde{\lambda} + z-\sigma_{\star}) t)\right] \\ 
    & = 2\sqrt{\lambda}\rho(1-\frac{\sigma_{\star}^2}{4\lambda^2})_{+}\frac{\Gamma(3)}{(\tilde{\lambda}+z -\sigma_{\star})^3}\mathbb{E}_{x \sim \mu_D}\left[\frac{1}{(\tilde{\lambda}-x)}\right],
\end{align*}
which remains bounded when $z \rightarrow 0$. Similarly, $\mathrm{Rem}_{T_2}$ can be bounded. Applying a similar trick, $\mathrm{Rem}_{T_3}$ can be bounded by 

\begin{align*}
   & \frac{2\sigma_{\star}^4}{16\lambda}\mathbb{E}_{(x_1,x_2) \sim \mu_D^{\otimes 2}}\left[\frac{1}{(\tilde{\lambda}-x_1)(\tilde{\lambda}-x_2)}\right]\int_{0}^{\infty} t^2\exp(-(2\tilde{\lambda}+z-2\sigma_{\star})t)dt = \\ & \frac{2\sigma_{\star}^4}{16\lambda} \frac{\Gamma(3)}{(2\tilde{\lambda}+z -2\sigma_{\star})^3} \mathbb{E}_{(x_1,x_2) \sim \mu_D^{\otimes 2}}\left[\frac{1}{(\tilde{\lambda}-x_1)(\tilde{\lambda}-x_2)}\right],
\end{align*}
and this once again remains bounded as $z \rightarrow 0$. Putting everything together and from \eqref{eq:rem1}, we have the desired result.

\end{proof}

We now turn to the proof of Theorem \ref{thm:limit_value}. 

\begin{proof}[Proof of Theorem \ref{thm:limit_value}]

First, using \eqref{eq:stieljes}, we have, 
\begin{align}
    \lim_{z \downarrow \sigma_{\star} } \mathcal{S}(z) = \frac{2}{\sigma_{\star}}, \,\,\,\,\, \lim_{z \uparrow -\sigma_{\star} } \mathcal{S}(z) = -\frac{2}{\sigma_{\star}}. \nonumber 
\end{align}
If $z_{*}$ is a real root of $z= \beta^{-1} \mathcal{S}(z/2)$, $y=2z_*$ is a real root of the fixed point equation $2 \beta y = \mathcal{S}(y)$. In turn, such a root exists if and only if $2\beta \sigma_* < 2/ \sigma_*$, which immediately gives us the desired conclusion.

Consider first the regime $2 \tilde{\lambda} > s_{\beta}$. 
Observe that \eqref{eq:g_exp} implies,  
\begin{align}
    \lim_{t\to \infty} \exp(-2\tilde{\lambda} t) g^2(t) = \lambda \rho^2 \Big(1- \frac{\sigma_{\star}^2}{4\lambda^2} \Big)_{+}^2. \nonumber 
\end{align}
Further, Lemma \ref{lem:phi_inversion_asymp} implies that as $z \to 0$ along a sector, 
\begin{align}
  z \mathcal{L}_{\Phi}(2 \tilde{\lambda} + z) \to 2\lambda^2 \rho^2 \Big( 1 - \frac{\sigma_{\star}^2}{4\lambda^2} \Big)_{+}^2\mathbb{E}\left[\frac{1}{(\tilde{\lambda}-\bm{\sigma})^2}\right] \nonumber 
\end{align}
Thus Lemma \ref{lem:h_laplace} implies 
\begin{align}
  \lim_{z \to 0}  z\mathcal{L}_h(2 \tilde{\lambda} + z) = \frac{2\tilde{\lambda}}{2 \tilde{\lambda} - \beta^{-1} \mathcal{S} (\tilde{\lambda}) }\mathbb{E}\left[\frac{1}{(\tilde{\lambda}-\bm{\sigma})^2}\right]\lambda^2 \rho^2 \Big(1- \frac{\sigma_{\star}^2}{4\lambda^2} \Big)_{+}^2. \nonumber 
 \end{align}
Thus using \cite[Lemma 7.2]{arous2001AgingSpherical}, 
\begin{align}
    \lim_{t \uparrow \infty} \exp(-2 \tilde{\lambda} t) h(t) = \frac{2\tilde{\lambda}}{2 \tilde{\lambda} - \beta^{-1} \mathcal{S} (\tilde{\lambda}) }\mathbb{E}\left[\frac{1}{(\tilde{\lambda}-\bm{\sigma})^2}\right]\lambda^2 \rho^2 \Big(1- \frac{\sigma_{\star}^2}{4\lambda^2} \Big)_{+}^2. \nonumber 
\end{align}
Noting that $\rho>0$, we have,  

\begin{align}
    \lim_{t \uparrow \infty} \frac{g^2(t)}{h(t)} = \frac{2 \tilde{\lambda} - \beta^{-1} \mathcal{S}(\tilde{\lambda}) }{2\tilde{\lambda} \lambda}\left[\mathbb{E}\left(\frac{1}{(\tilde{\lambda}-\bm{\sigma})^2}\right)\right]^{-1}.\nonumber 
\end{align}
To simplify this, first note that $\mathcal{S}'(z) = - \mathbb{E}\left(\frac{1}{(z-\bm{\sigma})^2}\right)$. Calculating the exact value of this derivative  we obtain that
    $$- \mathbb{E}\left(\frac{1}{(z-\bm{\sigma})^2}\right) = \frac{2\sqrt{z^2-\sigma_{\star}^2}-z}{\sigma_{\star}^2 \sqrt{z^2 - \sigma_{\star}^2}}. $$
   Equating these for $z= \tilde{\lambda}$ yields that
    $$ \left[\mathbb{E}\left(\frac{1}{(\tilde{\lambda}-\sigma)^2}\right)\right]^{-1} = \lambda^2(1-\frac{\sigma_{\star}^2}{4\lambda^2}).$$
    
    Putting things together, 
    $$  \lim_{t \uparrow \infty} \frac{g^2(t)}{\lambda h(t)} = \left( 1-\frac{\beta^{-1} \mathcal{S}(\tilde{\lambda})}{2 \tilde{\lambda}}\right)\left( 1-\frac{\sigma_{\star}^2}{4\lambda^2}\right).$$
    
    Now note that $\mathcal{S}(\tilde{\lambda}) = 1/\lambda$, so the final limit becomes
    $$\lim_{t \rightarrow \infty}\frac{g^2(t)}{\lambda h(t)} = \left( 1-\frac{\beta^{-1}}{2 \lambda(\lambda + \sigma_{\star}^2/4\lambda)}\right)\left( 1-\frac{\sigma_{\star}^2}{4\lambda^2}\right). $$

It remains to analyze the sub-critical regime $2 \tilde{\lambda} < s_{\beta}$. 
Using Lemma \ref{lem:h_laplace}, we note that $\mathcal{L}_{h}(\cdot)$ has a simple pole at $s_{\beta}$. Thus there exists $C_1>0$ such that 
\begin{align}
    \lim_{z \to 0} z\mathcal{L}_{h}(s_{\beta}+z) = C_1 \neq 0. 
\end{align}
In turn, this implies 
\begin{align}
    \lim_{t \uparrow \infty} \exp(-s_{\beta}t) h(t)= C_1. \nonumber 
\end{align}

This completes the proof in this sub-case, as $\lim_{t \to \infty} g^2(t)/h(t) =0$ immediately in this case.

Finally, we focus on the case $\rho=0$. Lemma \ref{lemma:g_inversion} implies that $g=0$ in this case. On the other hand, in this case $\Phi(t) = 2\mathbb{E}[\bm{Y_0}^2] \mathbb{E}[\exp(2t \bm{\sigma})]$. Thus using Lemma \ref{lem:h_laplace}, we note that the leading asymptotics of $h(\cdot)$ is determined by the pole at $s_{\beta}$. Specifically, $\lim_{t \uparrow \infty} h(t) \neq 0$, which concludes the proof.  
 
\end{proof}

\section{Proof of Theorem \ref{thm:limits}} 
\label{sec:CK_eqns} 

 Setting $U = \bG V \in \Reals^N$, the dynamics under the rotated coordinate system $\bY_t := \bG 
 \bX_t$ can be expressed as
\begin{align}
	\label{eqn:main-equation}
	\dif{Y_t^i} = U^i \langle U, Y_t \rangle \dif{t} + \sigma^i Y_t^i \dif{t} - f'\big(\| Y_t\|^2/N \big)  Y_t^i \dif{t} + \beta^{-1/2} \dif{B_t^i}.
\end{align}

Setting $u^i = \sqrt{N} U^i$, 
we note that Eqn.~\eqref{eqn:main-equation} reduces to
\begin{align}
	\dif{Y_t^i} = u^i R^{N}(t) \dif{t} + \big( \sigma^i - f'(K^{N}(u)) \big) Y_t^i \dif{t} + \beta^{-1/2} \dif{B_t^i}. 
\end{align}

Utilizing this expression, one can verify that $Y_t^i$ takes the following integral form

\begin{align}
	\label{eqn:integral-form}
	Y_t^i &= \exp\left\{ \int_0^t \big[ \sigma^i - f'(K^{N}(s)) \big]  \dif{s} \right\} \left\{ Y_0^i+ u^i \int_0^t \exp\left\{ -  \int_0^s \big[ \sigma^i - f'(K^{N}(r)) \big] \dif{r} \right\}  R^{N}(s)   \dif{s}   \right\} \\
	& \quad + \beta^{-1/2} \int_{0}^t \exp\left\{ \int_{s}^{t} \big[ \sigma^i - f'(K^{N}(r)) \big] \dif{r}   \right\}  \dif{B^i_s}.
\end{align}

\noindent 
We first re-express the solution of our SDE. 

\begin{lemma}
\label{lemma:soln_alternative} 
Define $F_t(K,\lambda) = f'(K(t,t)) - \lambda$. 
Then we have, 
\begin{align}
    Y_t^i =& Y_0^i \exp\Big[- \int_0^t F_s(K^N, \sigma_i) \dif{s} \Big] + \int_0^t \exp\Big[ - \int_s^t F_{s_1}(K^N, \sigma_i) \dif{s_1} \Big] u_i R^N(s) \dif{s} + \beta^{-1/2} B_t^i - \nonumber \\ &\beta^{-1/2} \int_0^t B_s^i F_s(K^N, \sigma_i) \exp\Big[- \int_s^t F_{s_1} (K^N, \sigma_i) \dif{s_1}\Big] \dif{s}. \label{eq:soln_alternative}  
\end{align}
\end{lemma}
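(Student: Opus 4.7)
The plan is to observe that the first two terms of \eqref{eq:soln_alternative} already match the first line of the integral form \eqref{eqn:integral-form} once we use $\sigma^i - f'(K^N(s)) = -F_s(K^N,\sigma_i)$. Thus the lemma reduces to a single identity for the stochastic integral in \eqref{eqn:integral-form}, namely
\begin{align}
\int_0^t \exp\!\Bigl\{-\!\int_s^t F_r\,\dif{r}\Bigr\}\,\dif{B_s^i} \;=\; B_t^i \;-\; \int_0^t B_s^i\,F_s \exp\!\Bigl\{-\!\int_s^t F_r\,\dif{r}\Bigr\}\,\dif{s},
\end{align}
where I abbreviate $F_r := F_r(K^N,\sigma_i)=f'(K^N(r,r))-\sigma^i$. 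Multiplying through by $\beta^{-1/2}$ then reproduces \eqref{eq:soln_alternative}.

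To establish this identity I would introduce the adapted, absolutely continuous process $A(s) := \int_0^s F_r\,\dif{r}$ and rewrite the left-hand side as $e^{-A(t)} \int_0^t e^{A(s)}\,\dif{B_s^i}$, factoring the $t$-dependence outside the Itô integral. Since $s\mapsto A(s)$ is absolutely continuous in $s$ (it is a Lebesgue integral of an adapted, locally bounded integrand), the process $s\mapsto e^{A(s)}$ has finite variation and zero quadratic covariation with $B^i$. Consequently, the Itô product rule collapses to ordinary integration by parts,
\begin{align}
d\bigl(e^{A(s)} B_s^i\bigr) \;=\; B_s^i\,e^{A(s)}\,F_s\,\dif{s} \;+\; e^{A(s)}\,\dif{B_s^i}.
\end{align}
Integrating from $0$ to $t$, using $A(0)=0$ and $B_0^i=0$, rearranging, and multiplying by $e^{-A(t)}$ yields the displayed identity.

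The only delicate point is verifying that $e^{A(\cdot)}$ really is an adapted, finite-variation process, so that no bracket term intrudes. This follows because $K^N(r,r) = \|\bY_r\|^2/N$ is a continuous, adapted functional of the strong solution (whose existence is recalled just before \eqref{eq:Ld}), and because the Lipschitz assumption on $f'$ together with the confining role of $f$ guarantees that $F_r$ is locally bounded on $[0,T]$ almost surely. Hence $A(s)$ is absolutely continuous and $e^{A(s)}$ is bounded on $[0,T]$, so the Itô integral $\int_0^t e^{A(s)}\,\dif{B_s^i}$ is well defined and the integration-by-parts step is legitimate. Once this is in place, the computation is purely algebraic and the lemma follows.
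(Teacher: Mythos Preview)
Your proposal is correct and follows exactly the paper's approach: the paper's proof is the one-line remark that the identity follows from the integration-by-parts formula $\int_0^t f_s\,\dif{B_s} = f_t B_t - \int_0^t B_s f_s'\,\dif{s}$ applied to the stochastic integral in \eqref{eqn:integral-form}, which is precisely what you carry out in detail (with the finite-variation justification made explicit).
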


\begin{proof}
The proof follows by an application of the integration by parts formula $\int_0^t f_s \dif{B_s} = f_tB_t - \int_0^t B_s f_s' \dif{s}$ on the solution \eqref{eqn:integral-form}.  
\end{proof}

\begin{definition}
We define the empirical measure 
\begin{align}
    \nu = \frac{1}{N} \sum_i \delta_{Y_0^i, u^i , \sigma^i, B_{\bullet}^i}, 
\end{align}
where the fourth marginal is an empirical distribution on the path space $\mathcal{C}[0,T]$.
\end{definition}

Consider the following collections of functions, with domain space $\mathbb{R}^3\times\mathcal{C}[0,T] $ and range space one of $\mathcal{C}[0,T]^j$ for $j=1,2,3$: 
$$
      \mathcal{F}_j \subset \{f: \mathbb{R}^3\times\mathcal{C}[0,T] \rightarrow \mathcal{C}([0,T]^j)  \}, \,\, j=1,2,3, \,\, \text{with} $$
      \begin{align*}
     \mathcal{F}_1= \{f_j, j=1\hdots 5: f_1(Y_0,u,\sigma,B_{\bullet})(w) =uY_0\exp(w\sigma), f_2(\cdot)(w)= u^2 \exp(w\sigma), f_3(\cdot)(t)= uB_t,\\ f_4(\cdot)(w)=Y_0^2\exp(w\sigma),f_5(\cdot)(w)= \sigma Y_0\exp(w \sigma) \},\\  
     \\
\mathcal{F}_2 = \{f_6, \hdots, f_{9}: f_6(Y_0,u,\sigma,B_{\bullet})(t,w)=uB_t\exp(w\sigma),f_7(\cdot)(t,w) = uB_t\sigma\exp(w\sigma),\\ f_8(\cdot)(t,s)=B_tB_s,f_{9}(\cdot)(s,w)=Y_0B_s\exp(w\sigma) \},\\
\\
\mathcal{F}_3= \{f_{10},f_{11},f_{12}: f_{10}(Y,u,\sigma,B_{\bullet})(s_1,s_2,w) = B_{s_1}B_{s_2}\exp(w\sigma),\\
f_{11}(\cdot)(s_1,s_2,w)=\sigma B_{s_1}B_{s_2}\exp(w\sigma),f_{12}(\cdot)(s_1,s_2,w)=\sigma^2B_{s_1}B_{s_2}\exp(w\sigma) \}
 \end{align*}
 Finally, define      
\begin{align}\label{eq:Fdef}
     \mathcal{F} =\mathcal{F}_1 \cup \mathcal{F}_2 \cup \mathcal{F}_3.
     \end{align}
 All functions in $\mathcal{F}$ have the same domain. To simplify notation, for elements in $\mathcal{F}_1$, we use $f(\cdot)(w) $ to mean $f(Y_0,u,\sigma,B_{\bullet})$ evaluated at $w$ for a generic point $(Y_0,u,\sigma,B_{\bullet}) \in \mathbb{R}^3 \times \mathcal{C}[0,T]$.
Similarly for elements in $\mathcal{F}_2$ and $\mathcal{F}_3$. 

Define $\mathcal{C}_{N} = \{ \int f \dif{\nu} : f \in \mathcal{F}\}$
 and note that for $f \in \mathcal{F}_j$, $\int f d\nu \in \mathcal{C}[0,T]^j, j=1,2,3.$ We introduce the following convention: for a discrete set $S=\{s_1,\hdots,s_k \}$ and a function $f(\cdot)$, $f(S) = f(s_1,\hdots,s_k)$. 
 The next result establishes that for all $N \geq 1$, $(R_N, K_N)$ depend on $\nu$ through the statistics $\mathcal{C}_N$. 

\begin{lemma}
\label{lemma:fixed_point} 
There exist  functions
\begin{align}
    &\Phi^{(1)} : \mathcal{C}[0,T] \times \mathcal{C}([0,T]^2) \times \mathcal{C}[0,T]^{|\mathcal{F}_1|}\times \mathcal{C}([0,T]^2)^{|\mathcal{F}_2|}\times \mathcal{C}([0,T]^3)^{|\mathcal{F}_3|} \to \mathcal{C}[0,T] ,\nonumber \\
    &\Phi^{(2)}: \mathcal{C}[0,T] \times \mathcal{C}([0,T]^2) \times \mathcal{C}[0,T]^{|\mathcal{F}_1|}\times \mathcal{C}([0,T]^2)^{|\mathcal{F}_2|}\times \mathcal{C}([0,T]^3)^{|\mathcal{F}_3|} \to \mathcal{C}([0,T]^2) ,\nonumber
\end{align}
such that 
\begin{align}\label{eq:fixed-point}
  R^N = \Phi^{(1)}(R^N, K^N ,\mathcal{C}_N), \,\, K^N = \Phi^{(2)}(R^N, K^N, \mathcal{C}_N).  
\end{align}  
\end{lemma}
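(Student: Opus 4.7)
The plan is to substitute the explicit integral representation of $Y_t^i$ from Lemma~\ref{lemma:soln_alternative} directly into the definitions $R^N(t) = \frac{1}{N}\sum_i u^i Y_t^i$ and $K^N(t,s) = \frac{1}{N}\sum_i Y_t^i Y_s^i$, and then to inspect each resulting summand to verify that it is either a coefficient depending only on $R^N$ and $K^N$ (through the macroscopic factor $E(t) := \exp\{-\int_0^t f'(K^N(r,r))\,\mathrm{d}r\}$), or an empirical average that matches exactly one of the twelve statistics generating $\mathcal{C}_N$.

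First I would rewrite Lemma~\ref{lemma:soln_alternative} by splitting $F_s(K^N,\sigma^i) = f'(K^N(s,s)) - \sigma^i$, which separates the particle-level dependence on $\sigma^i$ from the macroscopic $K^N$-dependence, yielding
\[
Y_t^i = Y_0^i E(t) e^{t\sigma^i} + u^i E(t) \int_0^t \tfrac{R^N(s)}{E(s)} e^{(t-s)\sigma^i}\,\mathrm{d}s + \beta^{-1/2} B_t^i - \beta^{-1/2} E(t) \int_0^t \tfrac{B_s^i}{E(s)} \bigl[f'(K^N(s,s)) - \sigma^i\bigr] e^{(t-s)\sigma^i}\,\mathrm{d}s.
\]
In this form, every factor carrying a particle index $i$ is a bounded functional of the quadruple $(Y_0^i, u^i, \sigma^i, B^i_\bullet)$ on $[0,T]$, while $E(\cdot)$, $R^N(\cdot)$, and $f'(K^N(\cdot))$ depend only on the macroscopic variables that serve as arguments of $\Phi^{(1)}, \Phi^{(2)}$.

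To construct $\Phi^{(1)}$, I would multiply the display above by $u^i$ and average in $i$, interchanging the finite $i$-sum with the Riemann integrals in $s$. The resulting four terms are products of explicit coefficients built from $R^N, E(\cdot), f'(K^N)$ with empirical averages of the form $\int \phi \,\mathrm{d}\nu$, where $\phi$ runs precisely through $\{f_1, f_2, f_3, f_6, f_7\} \subset \mathcal{F}$; collecting and rearranging defines $\Phi^{(1)}$. The construction of $\Phi^{(2)}$ follows the same recipe but on the product $Y_t^i Y_s^i$: expanding the two four-term representations yields sixteen cross-terms, whose particle-index averages, once the $F$-split is applied symmetrically in $t$ and $s$, are precisely the remaining members of $\mathcal{F}$ (namely $f_4, f_5, f_8, f_9, f_{10}, f_{11}, f_{12}$), modulated by coefficients built from $R^N, E(\cdot), f'(K^N)$ and iterated time-integrals.

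The main (essentially only) obstacle is bookkeeping: one must check that the prescribed family $\mathcal{F}$ in \eqref{eq:Fdef} is rich enough to absorb every cross-product of the form $\{Y_0, u, 1, B_t\}\times\{Y_0, u, 1, B_s\}$ modulated by the exponentials $e^{w\sigma}$ and the at-most-linear-in-$\sigma$ factors produced by the split $f'(K^N) - \sigma$. A direct enumeration shows that the twelve listed integrands suffice (with some, such as $f_3$, reused for both $R^N$ and $K^N$), with all remaining $f'(K^N(w,w))$ factors absorbed into the coefficient part. At that point, continuity of $\Phi^{(1)}, \Phi^{(2)}$ on the stated product spaces is immediate from uniform boundedness of $e^{w\sigma^i}$ on compact time intervals and the Lipschitzness of $f'$, closing the self-consistency system \eqref{eq:fixed-point} as claimed.
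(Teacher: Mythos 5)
Your proposal is correct and follows essentially the same route as the paper: substitute the integral representation from Lemma~\ref{lemma:soln_alternative}, split $F_s(K^N,\sigma^i)=f'(K^N(s,s))-\sigma^i$ to separate macroscopic from particle-level dependence, expand $R^N$ and $K^N$, and match every empirical average to a member of $\mathcal{F}$ (the paper organizes the $K^N$ expansion as a five-term decomposition $Y_t^i=\sum_{j=1}^5 T_j^i(t)$ rather than your four-term one, which is the same computation). Your exact assignment of which $f_j$ appear where is slightly off --- the $K^N$ expansion reuses $f_1,f_2,f_6,f_7$ in its cross-terms and it is $f_3$ that appears only in $R^N$ --- but this is a bookkeeping detail that does not affect the argument.
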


\begin{proof}
To this end, we combine \eqref{eq:soln_alternative} with the definition of $R_N$ to get a fixed point equation 
\begin{align}
    R^N(t) =& \frac{1}{N} \sum_{i=1}^{N} u^i Y_0^i \exp\Big[- \int_0^t (f'(K^N(s)) - \sigma^i) \dif{s} \Big] \nonumber \\
    &+ \frac{1}{N} \sum_{i=1}^{N} (u^i)^2 \int_0^t \exp\Big[- \int_s^t (f'(K^N(s_1)) - \sigma^i) \dif{s_1} \Big] R^N(s) \dif{s} \nonumber \\
    &+ \frac{\beta^{-1/2}}{N} \sum_{i=1}^{N} u^i B_t^i 
    -\frac{1}{N} \sum_{i=1}^{N} u^i \int_0^t B_s^i (f'(K^N(s)) - \sigma^i) \exp\Big[- \int_s^t (f'(K^N(s_1)) - \sigma^i) \dif{s_1} \Big]\dif{s}. \label{eq:R_representation} 
 \end{align}
 This implicitly specifies the function $\Phi^{(1)}$. The corresponding equation for $K^N$ is more involved. To track this representation systematically, we recall the representation of the solution $Y_t^i$ from \eqref{eq:soln_alternative}, and denote $Y_t^i := \sum_{j=1}^{5}T_j^i(t)$, where the $T_j^i(t)$ represent the respective terms in the RHS of \eqref{eq:soln_alternative}. Now, recall that $K^N(t,s) = \frac{1}{N} \sum_{i=1}^{N} Y_t^i Y_s^i$, and therefore 
 \begin{align}
     K^N(t,s) = \sum_{j=1}^{5} \frac{1}{N} \sum_{i=1}^{N} T_j^i(t) T_j^i(s) + \sum_{1\leq j_1<j_2\leq 5} \Big[ \frac{1}{N} \sum_{i=1}^{N} T_{j_1}^i(t) T_{j_2}^i(s) + \frac{1}{N} \sum_{i=1}^{N} T_{j_1}^i(s) T_{j_2}^i(t)\Big]. \label{eq:T_decomposition}  
\end{align}
  We argue that the RHS above is a continuous function of $R^N$, $K^N$ and $\nu$. To this end, we record these terms explicitly. Define 
  \begin{align}
      H_{\tau}^{\theta} (K) = \exp\Big[- \int_{\tau}^{\theta} f'(K(\xi)) \dif{\xi} \Big],\,\,\, DH_{\tau}^{\theta} = \frac{\dif{H_\tau^\theta(K) }}{\dif{\tau}} = f'(K(\tau)) \exp\Big[- \int_{\tau}^{\theta} f'(K(\xi) \dif{\xi} \Big]. \label{eq:h_defn} 
  \end{align}
  
  \noindent 
  First, we present the ``diagonal" terms. 
  
\begin{align}
    \frac{1}{N} \sum_{i=1}^{N} T_1^i(t) T_1^i(s) &= \frac{1}{N} \sum_{i=1}^{N} (Y_0^i)^2 \exp(\sigma^i (t+s)) H_0^t (K^N) H_0^s(K^N),  \nonumber 
    \end{align} 
    \begin{align} 
    \frac{1}{N}\sum_{i=1}^{N} T_2^i(t) T_2^i(s) &= \int_0^t \int_0^s \frac{1}{N} \sum_{i=1}^{N} (u^i)^2 \exp(\sigma^i (t+s -s_1-s_2) ) H_{s_1}^t(K^N) H_{s_2}^s(K^N) R^N(s_1) R^N(s_2) \dif{s_1} \dif{s_2},  \nonumber 
    \end{align} 
    \begin{align} 
    \frac{1}{N}\sum_{i=1}^{N} T_3^i(t) T_3^i(s) &=  \frac{\beta^{-1}}{N} \sum_{i=1}^{N} B_t^i B_s^i, \nonumber 
    \end{align} 
    \begin{align} 
    \frac{1}{N}\sum_{i=1}^{N} T_4^i(t) T_4^i(s) &= \frac{\beta^{-1}}{N} \sum_{i=1}^{N} \int_0^t \int_0^s B_{s_1}^i B_{s_2}^i DH_{s_1}^t(K^N) DH_{s_2}^t(K^N) \exp(\sigma^i (t+s -s_1 -s_2)) \dif{s_1}\dif{s_2}, \nonumber 
    \end{align}
    \begin{align} 
    \frac{1}{N}\sum_{i=1}^{N} T_5^i(t) T_5^i(s) &= \frac{\beta^{-1}}{N} \sum_{i=1}^{N} (\sigma^i)^2 \int_0^t \int_0^s B_{s_1}^i B_{s_2}^i \exp(\sigma^i (t+s - s_1 -s_2)) \dif{s_1} \dif{s_2}. \nonumber 
\end{align} 
\noindent 
Next, we present the ``off-diagonal" terms. 
\begin{align}
    \frac{1}{N}\sum_{i=1}^{N} [ T_1^i(t) T_2^i(s) + T_1^i (s) T_2^i(t)] =& H_0^t (K^N) \int_0^s \frac{1}{N} \sum_{i=1}^{N} u^i Y_0^i \exp(\sigma^i (t+s -s_1)) H_{s_1}^{s}(K^N) R^N(s_1) \dif{s_1} \nonumber \\
    +& H_0^s (K^N) \int_0^t \frac{1}{N} \sum_{i=1}^{N} u^i Y_0^i \exp(\sigma^i (t+s -s_1)) H_{s_1}^{t}(K^N) R^N(s_1) \dif{s_1}, \nonumber 
    \end{align} 
    \begin{align} 
    \frac{1}{N}\sum_{i=1}^{N} [ T_1^i(t) T_3^i(s) + T_1^i (s) T_3^i(t)] =& H_0^t(K^N) \frac{\beta^{-1/2}}{N} \sum_{i=1}^{N} Y_0^i B_s^i \exp(\sigma^i t)  + H_0^s(K^N) \frac{\beta^{-1/2}}{N} \sum_{i=1}^{N} Y_0^i B_t^i \exp(\sigma^i s), \nonumber
    \end{align}
    \begin{align} 
    \frac{1}{N}\sum_{i=1}^{N} [ T_1^i(t) T_4^i(s) + T_1^i (s) T_4^i(t)] =& - \beta^{-1/2} H_0^t(K^N) \int_0^s \frac{1}{N} \sum_{i=1}^{N} Y_0^i B_{s_1}^i \exp(\sigma^i (t+s-s_1)) DH_{s_1}^s(K^N) \dif{s_1} \nonumber \\
    &- \beta^{-1/2} H_0^s(K^N) \int_0^t \frac{1}{N} \sum_{i=1}^{N} Y_0^i B^i_{s_1}\exp(\sigma^i (t+s-s_1) ) DH_{s_1}^t(K^N) \dif{s_1} \nonumber 
    \end{align}
    \begin{align} 
    \frac{1}{N}\sum_{i=1}^{N} [ T_1^i(t) T_5^i(s) + T_1^i (s) T_5^i(t)] =& \beta^{-1/2} H_0^t(K^N) \int_0^s \frac{1}{N} \sum_{i=1}^{N} \sigma^i Y_0^i \exp(\sigma^i(t+s-s_1)) \dif{s_1} \nonumber \\
    & + \beta^{-1/2} H_0^s(K^N) \int_0^t \frac{1}{N} \sum_{i=1}^{N} \sigma^i Y_0^i \exp(\sigma^i(t+s-s_1)) \dif{s_1}, \nonumber \end{align}
    \begin{align} 
    \frac{1}{N}\sum_{i=1}^{N} [ T_2^i(t) T_3^i(s) + T_2^i (s) T_3^i(t)] =& \beta^{-1/2} \int_0^t \frac{1}{N} \sum_{i=1}^{N} u^i B^i_s \exp(\sigma^i(t-s_1)) H_{s_1}^t(K^N) R^N(s_1) \dif{s_1} \nonumber \\
    &+ \beta^{-1/2} \int_0^s \frac{1}{N} \sum_{i=1}^{N} u^i B^i_t \exp(\sigma^i(s-s_1)) H_{s_1}^t(K^N) R^N(s_1) \dif{s_1} \nonumber 
    \end{align}
    \begin{align} 
    \frac{1}{N}\sum_{i=1}^{N} [ T_2^i(t) T_4^i(s) + T_2^i (s) T_4^i(t)] =& - \beta^{-1/2} \int_0^t \int_0^{s} \frac{1}{N}\sum_{i=1}^{N} u^i B^{i}_{s_2}\exp(\sigma^i(t+s-s_1-s_2)) H^t_{s_1}(K^N) R^N(s_1) DH_{s_2}^s(K^N) \dif{s_1}\dif{s_2} \nonumber \\
    -& \beta^{-1/2} \int_0^t \int_0^{s} \frac{1}{N}\sum_{i=1}^{N} u^i B^{i}_{s_1}\exp(\sigma^i(t+s-s_1-s_2)) H^s_{s_2}(K^N) R^N(s_2) DH_{s_1}^t(K^N) \dif{s_1}\dif{s_2}, \nonumber 
    \end{align}
    \begin{align} 
    \frac{1}{N}\sum_{i=1}^{N} [ T_2^i(t) T_5^i(s) + T_2^i (s) T_5^i(t)] =& \beta^{-1/2} \int_0^t \int_0^s \frac{1}{N} \sum_{i=1}^{N} u^i \sigma^i  B^i_{s_2}\exp( \sigma^i (t+s -s_1-s_2)) H^t_{s_1}(K^N) R^N(s_1) \dif{s_1}\dif{s_2} \nonumber\\
    +&\beta^{-1/2} \int_0^t \int_0^s \frac{1}{N} \sum_{i=1}^{N} u^i \sigma^i  B^i_{s_1}\exp( \sigma^i (t+s -s_1-s_2)) H^s_{s_2}(K^N) R^N(s_2) \dif{s_1}\dif{s_2}, \nonumber
    \end{align}
    \begin{align} 
    \frac{1}{N}\sum_{i=1}^{N} [ T_3^i(t) T_4^i(s) + T_3^i (s) T_4^i(t)] =&
    - \frac{\beta^{-1}}{N} \sum_{i=1}^{N} \int_0^s B_t^i B_{s_1}^i \exp(\sigma^i (s-s_1)) DH_{s_1}^s(K^N) \dif{s_1} \nonumber \\
    &-\frac{\beta^{-1}}{N} \sum_{i=1}^{N} \int_0^t B_s^i B_{s_1}^i \exp(\sigma^i (t-s_1)) DH_{s_1}^t(K^N) \dif{s_1}\nonumber \end{align}
    \begin{align} 
    \frac{1}{N}\sum_{i=1}^{N} [ T_3^i(t) T_5^i(s) + T_3^i (s) T_5^i(t)] =& \beta^{-1} \int_0^{s} \frac{1}{N} \sum_{i=1}^{N} \sigma^i B_t^i B_{s_1}^i \exp(\sigma^i(s-s_1)) \dif{s_1} + \beta^{-1} \int_0^t \frac{1}{N} \sum_{i=1}^{N} \sigma^i B_s^i B^i_{s_1} \exp(\sigma^i(t-s_1)) \dif{s_1}, \nonumber 
    \end{align}
    \begin{align} 
    \frac{1}{N}\sum_{i=1}^{N} [ T_4^i(t) T_5^i(s) + T_4^i (s) T_5^i(t)] =& - \frac{\beta^{-1}}{N} \sum_{i=1}^{N} \sigma^i \int_0^t \int_0^s B_{s_1}^i B_{s_2}^i \exp(\sigma^i (t+s-s_1-s_2)) DH^t_{s_1}(K^N) \dif{s_1}\dif{s_2} \nonumber \\
    &- \frac{\beta^{-1}}{N} \sum_{i=1}^{N} \sigma^i \int_0^t \int_0^s B_{s_1}^i B_{s_2}^i \exp(\sigma^i (t+s-s_1-s_2)) DH^s_{s_2}(K^N) \dif{s_1}\dif{s_2}. \nonumber 
 \end{align}
 This specifies the function $\Phi^{(2)}$ implicitly. 
 
\end{proof}

Given Lemma \ref{lemma:fixed_point}, we next establish that $R^N$, $K^N$ are, in fact, functions of the low-dimensional statistics $\mathcal{C}_N$. 

\begin{lemma} 
\label{lemma:empirical_function} 
    There exist functions 
    \begin{align}
       & \Psi^{(1)}: \mathcal{C}[0,T]^{|\mathcal{F}_1|}\times \mathcal{C}([0,T]^2)^{|\mathcal{F}_2|}\times \mathcal{C}([0,T]^3)^{|\mathcal{F}_3|} \to \mathcal{C}[0,T], \nonumber\\
        & \Psi^{(2)}: \mathcal{C}[0,T]^{|\mathcal{F}_1|}\times \mathcal{C}([0,T]^2)^{|\mathcal{F}_2|}\times \mathcal{C}([0,T]^3)^{|\mathcal{F}_3|} \to \mathcal{C}([0,T]^2) \nonumber 
    \end{align}
     such that 
    $$R^N = \Psi^{(1)}(\mathcal{C}_N), \,\, K^N = \Psi^{(2)}(\mathcal{C}_N).$$ 
\end{lemma}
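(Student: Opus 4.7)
The plan is to upgrade Lemma~\ref{lemma:fixed_point} to Lemma~\ref{lemma:empirical_function} by showing that, for each realization of $\mathcal{C}_N$, the fixed-point system \eqref{eq:fixed-point} determines $(R^N, K^N)$ uniquely. Existence of at least one solution is free: the strong solution of the SDE \eqref{eqn:main-equation} already yields a pair $(R^N, K^N)$ satisfying \eqref{eq:fixed-point}. Hence uniqueness is the only thing to establish, after which the measurable functions $\Psi^{(1)}, \Psi^{(2)}$ can be defined by selecting the unique solution of the fixed-point system corresponding to a given value of $\mathcal{C}_N$.

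The first step is a pathwise a priori bound. Using the Lipschitz assumption on $f'$, the boundedness of $\{\sigma^i\}$, and Gronwall applied to \eqref{eq:R_representation} and \eqref{eq:T_decomposition}, I would show that any solution $(R, K)$ of \eqref{eq:fixed-point} with the given $\mathcal{C}_N$ must satisfy $\|R\|_{\infty,[0,T]} + \|K\|_{\infty,[0,T]^2} \leq M$ for some $M$ depending on $T$, $N$, and the realization. This places any two solutions $(R_j, K_j)$, $j=1,2$, inside the same bounded ball.

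Next, let $\delta(t) := \sup_{r \leq t} |R_1(r) - R_2(r)| + \sup_{s, s' \leq t} |K_1(s,s') - K_2(s',s)|$. The Lipschitz property of $f'$ on bounded sets, combined with the identity
\[
H_\tau^\theta(K_1) - H_\tau^\theta(K_2) = H_\tau^\theta(K_2) \Bigl( \exp\Bigl\{-\int_\tau^\theta [f'(K_1(\xi)) - f'(K_2(\xi))]\,d\xi\Bigr\} - 1 \Bigr)
\]
(and a similar control of $DH_\tau^\theta(K_1) - DH_\tau^\theta(K_2)$ through the mean value theorem), lets me compare the fifteen terms appearing in \eqref{eq:R_representation} and the expansion \eqref{eq:T_decomposition} one by one. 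All integrands are bounded, and the remaining $R$- and $K$-dependence enters linearly through convolutions in time. This produces an integral inequality of Volterra form
\[
\delta(t) \leq C \int_0^t \delta(r)\, dr, \qquad t \in [0, T],
\]
with $C = C(T, N, \omega, M)$. Gronwall then forces $\delta \equiv 0$, so $(R_1, K_1) = (R_2, K_2)$, and $\Psi^{(1)}, \Psi^{(2)}$ are well-defined measurable functions of $\mathcal{C}_N$ alone.

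The main obstacle is the bookkeeping in the comparison step: the decomposition \eqref{eq:T_decomposition} contains five diagonal and ten off-diagonal terms, each a double or triple Volterra integral whose kernels involve $H$ and $DH$ evaluated at $K$. Each of these must be estimated to exhibit the $C \int_0^t \delta(r) dr$ upper bound, and in particular the terms carrying $DH_\tau^\theta(K) = f'(K(\tau)) H_\tau^\theta(K)$ require using that $f'$ is simultaneously Lipschitz and bounded on the a priori bounded range of $K$. Once this estimate is done uniformly in $(t, s) \in [0,T]^2$ using sup-norms (and exchanging suprema with the time integrals over $s_1, s_2$ as needed), the linear Gronwall inequality closes without any further subtlety.
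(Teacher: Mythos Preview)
Your proposal is correct and rests on the same Volterra--Gronwall mechanism as the paper, but the packaging differs. The paper runs a Picard iteration \eqref{eq:picard}: it first records the elementary bounds on $H^\theta_\tau$, $DH^\theta_\tau$ and a uniform-in-$m$ bound on $\|R^N_m\|_\infty$ (Lemma~\ref{lemma:prelim_bounds}), then shows
\[
\max\{\|R^N_{m+1}-R^N_m\|_\infty,\ \|K^N_{m+1}-K^N_m\|_\infty\}\le 2^m C^m C_1\,\frac{T^m}{m!},
\]
so the iterates form a Cauchy sequence and the unique fixed point is obtained constructively as their limit. You instead take existence from the SDE for free and prove uniqueness directly by comparing two putative solutions, closing with the linear Gronwall inequality $\delta(t)\le C\int_0^t\delta(r)\,dr$. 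Both routes hinge on exactly the same term-by-term estimates for the fifteen pieces of \eqref{eq:T_decomposition} via the Lipschitz property of $f'$; the paper's iteration is a bit more self-contained (it does not need the SDE solution as input and simultaneously yields $\Psi^{(1)},\Psi^{(2)}$ as explicit limits of iterates), while your argument is shorter given that a solution is already in hand. One small remark: since $f'$ is assumed \emph{globally} Lipschitz here, the a~priori bound on $K$ is needed only to control multiplicative factors like $|R(s)|$ and $f'(K(\tau))$ in the product estimates, not to localize the Lipschitz constant.
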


To this end, our main strategy is to apply a Picard iteration scheme on the fixed point equations \eqref{eq:fixed-point}. We start with some initial guess $R_{0}^N$, $K_{0}^N$, and carry out the iterative updates
\begin{align}
    R_{m+1}^{N} = \Phi^{(1)}(R_{m}^N, K_{m}^N, \mathcal{C}_N), \,\,\,\, K_{m+1}^N= \Phi^{(2)}(R_{m}^N, K_{m}^N,\mathcal{C}_N). \label{eq:picard} 
\end{align}

We will show that this iteration system is contractive, and thus identify $R^N$, $K^N$ as the unique fixed points of this system. In this endeavor, we will utilize the precise form of the functions $\Phi^{(1)}$, $\Phi^{(2)}$ as described in the proof of Lemma \ref{lemma:fixed_point}. First, we need some preliminary estimates. 
\begin{lemma}
\label{lemma:prelim_bounds} 
Recall that $f'$, defined in \eqref{eq:Ld}, is non-negative and Lipschitz.
Then we have that
\begin{itemize}
    \item[(i)] for any $m,N\geq 1$, $0\leq H^{\theta}_{\tau}(K_{m}^N) \leq 1$. 
    \item[(ii)] For any $0 \leq t \leq T$, $N,m \geq 1$, $\int_0^t |DH_u^t (K_{m}^N)| \dif{u} \leq 1 $. 
    \item[(iii)] For any $\theta \leq T$, 
    \begin{align}
        \sup_{\tau \leq \theta} |H^{\theta}_{\tau}(K_{m+1}^N) - H^{\theta}_{\tau}(K_{m}^N)| \leq \| f'\|_{L} \int_0^{\theta} |K_{m+1}^N(s,s) - K_{m}^N(s,s)| \dif{s}. 
    \end{align}
    \item[(iv)] For any $N,m \geq 1$, $0\leq \tau \leq \theta \leq T$, 
    \begin{align}
        |DH^{\theta}_{\tau} (K_{m+1}^N) - DH^{\theta}_{\tau}(K_{m}^N) | &\leq \|f'\|_L \Big[ |K_{m+1}^N(\tau, \tau) - K_{m}^N(\tau, \tau)| + (DH^{\theta}_{\tau} (K_{m+1}^N) \nonumber \\
       & + DH^{\theta}_{\tau}(K_{m}^N) ) \int_0^{\theta} | K_{m+1}^N(s,s) - K_{m}^N(s,s) | \dif{s}\Big]. \nonumber 
    \end{align}
    \item[(v)] With probability 1, there exist $C_0$, $C_1$ (possibly random), depending on $T$ such that 
    \begin{align}
        \sup_{m\geq 1} \| R_{m}^N \|_{\infty} \leq C_0 \exp{(C_1 T)}. 
    \end{align}
\end{itemize}
\end{lemma}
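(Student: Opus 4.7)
The plan is to treat items (i)--(iv) as analytic warmups that essentially unpack the definition of $H^\theta_\tau$ and exploit $f'\ge 0$ along with its Lipschitz property, while item (v) is the genuinely delicate step that requires extracting uniformity in $m$ despite $K^N_m$ lacking any a priori control. For (i), $f'\ge 0$ forces the exponent in $H^\theta_\tau(K^N_m)=\exp(-\int_\tau^\theta f'(K^N_m(\xi))\,\dif{\xi})$ to be nonpositive, so $H^\theta_\tau(K^N_m)\in(0,1]$. For (ii), the identity $DH^t_u(K^N_m)=\frac{\dif{}}{\dif{u}}H^t_u(K^N_m)\ge 0$ shows the absolute value is superfluous and the integral telescopes to $H^t_t(K^N_m)-H^t_0(K^N_m)=1-H^t_0(K^N_m)\le 1$.

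For (iii), I would invoke the elementary estimate $|e^{-a}-e^{-b}|\le |a-b|$ valid on $a,b\ge 0$ with $a_j=\int_\tau^\theta f'(K^N_j(\xi))\,\dif{\xi}$; combined with the Lipschitz bound on $f'$ this gives $\sup_{\tau\le \theta}|H^\theta_\tau(K^N_{m+1})-H^\theta_\tau(K^N_m)|\le \|f'\|_L\int_0^\theta|K^N_{m+1}(s,s)-K^N_m(s,s)|\,\dif{s}$. For (iv), I would decompose $DH^\theta_\tau(K)=f'(K(\tau))H^\theta_\tau(K)$ using the telescoping identity $f'_1 H_1-f'_2 H_2=(f'_1-f'_2)H_1+f'_2(H_1-H_2)$; the first summand is controlled by $\|f'\|_L|K^N_{m+1}(\tau,\tau)-K^N_m(\tau,\tau)|$ (using $H\le 1$ from (i)), and the second by $f'(K^N_m(\tau))\cdot\|f'\|_L\int_0^\theta|K^N_{m+1}(s,s)-K^N_m(s,s)|\,\dif{s}$ (using (iii)). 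Rewriting $f'(K^N_j(\tau))$ through the identity $DH^\theta_\tau(K^N_j)=f'(K^N_j(\tau))\cdot H^\theta_\tau(K^N_j)$ together with $H\le 1$, and symmetrizing the split over $m$ and $m+1$, delivers the claimed inequality.

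Part (v) is the main obstacle: the representation \eqref{eq:R_representation} for $R^N_{m+1}$ contains both a stochastic integral and a term $f'(K^N_m(s))$ that is not a priori bounded uniformly in $m$. I would resolve this via deterministic integration by parts: the identity $f'(K^N_m(s))\exp(-\int_s^t f'(K^N_m(r))\,\dif{r})=DH^t_s(K^N_m)=\frac{\dif{}}{\dif{s}}H^t_s(K^N_m)$, together with item (ii), allows one to rewrite
\begin{equation*}
\int_0^t\exp\Big[\int_s^t(\sigma^i-f'(K^N_m(r)))\,\dif{r}\Big]\,\dif{B^i_s}=B^i_t-\int_0^t B^i_s\,\dif{\Theta^i_m(s)},
\end{equation*}
for a signed measure $\Theta^i_m$ whose total variation is bounded by some $C=C(T,d_+)$ \emph{independent of $m$}. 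Combining this with item (i) and the uniform bound $\max_i|\sigma^i|\to d_+$ yields a recursive inequality of the form $|R^N_{m+1}(t)|\le A+B\int_0^t|R^N_m(s)|\,\dif{s}$, where $A,B$ are a.s.~finite empirical averages over $i$ involving $Y^i_0$, $u^i$, $\sigma^i$, and $\sup_{s\le T}|B^i_s|$ (controlled by the LLN and standard Gaussian-maxima bounds). Induction on $m$ gives $|R^N_m(t)|\le A\sum_{k=0}^m (Bt)^k/k!\le A e^{BT}$, yielding the desired uniform bound.
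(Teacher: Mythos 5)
Your treatment of part (v) --- the only part the paper actually proves, since it cites (i)--(iv) directly from the reference \cite{arous2001AgingSpherical} --- matches the paper's argument: both start from the integrated-by-parts representation \eqref{eq:R_representation}, use (i) and (ii) to bound the kernels $H^t_s$ and $DH^t_s$ uniformly in $m$, absorb the empirical averages over $(Y_0^i,u^i,\sigma^i,\sup_s|B_s^i|)$ into a.s.\ finite constants, and close with the recursion $|R^N_{m+1}(t)|\le A+B\int_0^t|R^N_m(s)|\dif{s}$. Your explicit induction yielding $A\sum_{k}(Bt)^k/k!\le Ae^{BT}$ is in fact the correct form of the paper's ``iterating this bound'': the paper writes the recursion with $\int_0^T$ rather than $\int_0^t$, which as literally stated only iterates to a finite bound when $C_1T<1$. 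Your proofs of (i)--(iii), which the paper omits, are correct.

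The one genuine gap is in (iv). Writing $f'_j:=f'(K^N_j(\tau,\tau))$, $H_j:=H^\theta_\tau(K^N_j)$ for $j\in\{m,m+1\}$, your decomposition leaves a second summand bounded by $f'_m\,\|f'\|_L\int_0^\theta|K^N_{m+1}(s,s)-K^N_m(s,s)|\dif{s}$, and you propose to convert the prefactor $f'_m$ into $DH^\theta_\tau(K^N_{m+1})+DH^\theta_\tau(K^N_m)$ ``using $H\le 1$''. This goes the wrong way: $DH^\theta_\tau(K^N_m)=f'_mH_m\le f'_m$, so $H\le 1$ gives $DH\le f'$, not $f'\le DH$, and symmetrizing the split over $m$ and $m+1$ does not repair it (consider $f'_{m+1}=0$, $f'_m>0$, $H_m$ small). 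A correct route is to pick the decomposition that places $\min(f'_m,f'_{m+1})$ in front of $H_{m+1}-H_m$, bound $|H_{m+1}-H_m|\le\max(H_m,H_{m+1})\,\|f'\|_L\int_0^\theta|K^N_{m+1}(s,s)-K^N_m(s,s)|\dif{s}$, and then check by cases (according to which index attains the max) that $\min(f'_m,f'_{m+1})\max(H_m,H_{m+1})\le f'_mH_m+f'_{m+1}H_{m+1}=DH^\theta_\tau(K^N_m)+DH^\theta_\tau(K^N_{m+1})$. With that substitution your argument for (iv) closes; the remaining parts need no change.
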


\begin{proof}[Proof of Lemma \ref{lemma:prelim_bounds}]
The parts (i)-(iv) are directly adapted from \cite{arous2001AgingSpherical}, and are just collected here for the convenience of the reader. We prove (v). Note that 
\begin{align}
    &R_{m+1}^N(t) = H_0^t (K_{m}^N) \frac{1}{N}\sum_{i=1}^{N} u^i Y_0^i \exp{(\sigma^i t)}+ \frac{1}{N} \sum_{i=1}^{N} (u^i)^2 \int_0^t \exp{(\sigma^i (t-s))} R_{m}^N(s) H_s^t (K_{m}^N) \dif{s}  \nonumber \\
    &+ \frac{\beta^{-1/2}}{N} \sum_{i=1}^{N} u^i B^i_t - \frac{1}{N} \sum_{i=1}^{N} u^i \int_0^t B^i_s \exp{(\sigma^i (t-s)) } DH_s^t(K_{m}^N)\dif{s} + \frac{1}{N}\sum_{i=1}^{N} u^i \sigma^i \int_0^t B_s^i \exp(\sigma^i (t-s)) H_s^t (K_{m}^N) \dif{s}. \nonumber 
\end{align}
This implies 
\begin{align}
    |R_{m+1}^N(t) | \leq& |H_0^t(K_{m}^N)| \cdot | \frac{1}{N} \sum_{i=1}^{N} u^i Y_0^i \exp(\sigma^i t) |  + \int_0^t | \frac{1}{N} \sum_{i=1}^{N} (u^i)^2 \exp(\sigma^i(t-s)) |\cdot | H_s^t(K_{(m)}^N | \cdot |R_{(m)}^N(s)| \dif{s}  + \frac{\beta^{-1/2}}{N} | \sum_{i=1}^{N} u^i B_t^i | \nonumber \\
    &+ \int_0^t |\frac{1}{N}\sum_{i=1}^{N} u^i B_s^i \exp(\sigma^i(t-s))|\cdot |DH_s^t(K_{(m)}^N)| \dif{s} + \int_0^t | \frac{1}{N} \sum_{i=1}^{N} u^i \sigma^i B_s^i \exp(\sigma^i(t-s)) | \cdot |H_s^t(K_{(m)}^N)| \dif{s}.\nonumber 
\end{align}
Thus there exists $C_0, C_1>0$ (possibly random), independent of $m$, such that 
\begin{align}
  \sup_{0\leq t \leq T}  |R_{m+1}^N(t)| \leq C_0 + C_1 \int_0^T |R_{m}^N(s)| \dif{s}. \nonumber 
\end{align}
Iterating this bound in $m$, we obtain 
\begin{align}
    \sup_{0\leq t \leq T} |R_{m+1}^N(t)| \leq C_0 \exp(C_1 T). \nonumber 
\end{align}
This completes the proof. 
\end{proof}

\noindent 
Armed with Lemma \ref{lemma:prelim_bounds}, we turn to a proof of Lemma \ref{lemma:empirical_function}. 

\begin{proof}[Proof of Lemma \ref{lemma:empirical_function}]
First observe that for any $m\geq 1$,\eqref{eq:R_representation} implies
\begin{align}
    R_{m+1}^N(t) - R_{m}^N(t) =& \frac{1}{N} \sum_{i=1}^{N} u^i Y_0^i \exp(\sigma^i t) (H_0^t(K_{m}^N) - H_0^t(K_{m-1}^N))  \nonumber \\
    +& \int_0^t \frac{1}{N}\sum_{i=1}^{N} (u^i)^2 \exp(\sigma^i(t-s)) (R_{m}^N(s) H^t_s(K_{m}^N) - R_{m-1}^N(s) H^t_s(K_{m-1}^N)) \dif{s} \nonumber \\
    -& \int_0^t \frac{1}{N}\sum_{i=1}^N u^i B_s^i \exp(\sigma^i(t-s)) (DH_s^t(K_{m}^N) - DH_s^t(K_{m-1}^N)) \dif{s}\nonumber \\
    +& \int_0^t \frac{1}{N} \sum_{i=1}^{N} u^i \sigma^i B_s^i \exp(\sigma^i (t-s)) (H_s^t (K_{m}^N) - H_s^t(K_{m-1}^N)) \dif{s}. \nonumber 
 \end{align}
 This implies, using Lemma \ref{lemma:prelim_bounds}, there exists a constant $C>0$ such that 
 \begin{align}
   &\sup_{0\leq t \leq T} | R_{m+1}^N(t) - R_{m}^N(t)| \nonumber \\
   &\leq C  \Big[ \int_0^T |K_{m}^N(\tau,\tau) - K_{m-1}^N(\tau,\tau) | \dif{\tau} + \sup_{[0,T]}\int_0^t |R_{m}^N (s) H_s^t(K_{m}^N)- R_{m-1}^N (s) H_s^t(K_{m-1}^N )| \dif{s} \Big]. \label{eq:int1}   
 \end{align}
 To control the second term, we observe, 
 \begin{align}
     &|R_{m}^N(s) H_s^t(K_{m}^N) - R_{m-1}^N(s) H_s^t(K_{m-1}^N)|\nonumber \\
    &\leq |R_{m}^N(s)| \cdot | H_s^t(K_{m}^N) - H_s^t(K_{m-1}^N)| + |H_s^t(K_{m-1}^N) | \cdot |R_{m}^N(s) - R_{m-1}^N(s)|, \nonumber   
 \end{align}
 which directly implies 
 \begin{align}
     &\sup_{[0,T]} \int_0^t |R_{m}^N(s) H_s^t(K_{m}^N) - R_{m-1}^N(s) H_s^t(K_{m-1}^N)| \dif{s} \nonumber \\
    &\lesssim \int_0^{T} |K_{m}^N(s,s) - K_{m-1}^N(s,s) | \dif{s} + \int_0^{T} |R_{m}^N(s) - R_{m-1}^N(s) | \dif{s}. \nonumber  
 \end{align}
 Plugging this back into \eqref{eq:int1}, there exists $C>0$ (independent of $m$) such that 
 \begin{align}
     \|R_{m+1}^N - R_{m}^N\|_{\infty} &:= \sup_{t \in [0,T] } |R_{m+1}^N(t) - R_{m}^N(t)| \nonumber \\
     &\leq C\Big[ \int_0^T |K_{m}^N(s,s) - K_{m-1}^N(s,s) | \dif{s} + \int_0^T |R_{m}^N(s) - R_{m-1}^N(s)| \dif{s} \Big]. \nonumber 
 \end{align}
 Analyzing the iterative update equation for $K_{m}^N$, one can derive a similar bound. 
 \begin{align}
     \| K_{m+1}^N - K_{m}^N \|_{\infty} &:= \sup_{s,t \in [0,T]^2} | K_{m}^N(s,t) - K_{m-1}^N(s,t)| \nonumber\\ 
     &\leq C\Big[ \int_0^T |K_{m}^N(s,s) - K_{m-1}^N(s,s) | \dif{s} + \int_0^T |R_{m}^N(s) - R_{m-1}^N(s)| \dif{s} \Big].
 \end{align}
 Iterating these bounds, it follows that there exists $C_1>0$ such that \begin{align}
     \max\{ \|R_{m+1}^N - R_{m}^N \|_{\infty} , \|K_{m+1}^N - K_{m}^N\|_{\infty} \} \leq 2^m C^m C_1 \frac{T^m}{m!}. \nonumber
 \end{align}
 Thus the iteration is contractive, and the fixed point system \eqref{eq:fixed-point} has a unique fixed point for each $N \geq 1$. This completes the proof. 
\end{proof}

Finally, we will establish that $\Psi^{(1)}, \Psi^{(2)}$ derived in Lemma \ref{lemma:empirical_function} are continuous functions. To this end, recall that 

\begin{align}
       & \Psi^{(1)}: \mathcal{C}[0,T]^{|\mathcal{F}_1|}\times \mathcal{C}([0,T]^2)^{|\mathcal{F}_2|}\times \mathcal{C}([0,T]^3)^{|\mathcal{F}_3|} \to \mathcal{C}[0,T], \nonumber\\
        & \Psi^{(2)}: \mathcal{C}[0,T]^{|\mathcal{F}_1|}\times \mathcal{C}([0,T]^2)^{|\mathcal{F}_2|}\times \mathcal{C}([0,T]^3)^{|\mathcal{F}_3|} \to \mathcal{C}([0,T]^2) \nonumber 
\end{align}

We equip $\mathcal{C}([0,T]^j)$, $j = 1,2,3$, with the sup-norm topology. Further, we equip $\mathcal{C}[0,T]^{|\mathcal{F}_1|}\times \mathcal{C}([0,T]^2)^{|\mathcal{F}_2|}\times \mathcal{C}([0,T]^3)^{|\mathcal{F}_3|}$ with the product topology. With this notion of convergence, we can establish the following continuity properties of $\Psi^{(1)}$ and $\Psi^{(2)}$.

\begin{lemma}
\label{lemma:continuity}
The maps $\mathscr{C} \mapsto \Psi^{(1)}(\mathscr{C})$ and $\mathscr{C} \mapsto \Psi^{(2)}(\mathscr{C})$ are continuous. 
\end{lemma}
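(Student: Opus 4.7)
The plan is to leverage the fixed-point representation from Lemma~\ref{lemma:empirical_function}: writing $(R^{\mathscr{C}}, K^{\mathscr{C}}) = (\Psi^{(1)}(\mathscr{C}), \Psi^{(2)}(\mathscr{C}))$, we have
\begin{align*}
R^{\mathscr{C}} = \Phi^{(1)}(R^{\mathscr{C}}, K^{\mathscr{C}}, \mathscr{C}), \qquad K^{\mathscr{C}} = \Phi^{(2)}(R^{\mathscr{C}}, K^{\mathscr{C}}, \mathscr{C}).
\end{align*}
The strategy is to first establish joint sup-norm continuity of $\Phi^{(1)}, \Phi^{(2)}$ in $(R, K, \mathscr{C})$ on bounded sets, and then transfer continuity to the fixed points through a Gronwall argument in the same spirit as the contraction bound already proved in Lemma~\ref{lemma:empirical_function}.

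For step one, I would inspect the explicit formulas for $\Phi^{(1)}, \Phi^{(2)}$ displayed in the proof of Lemma~\ref{lemma:fixed_point}. Every summand there is a finite-dimensional integral of a product of (i) one coordinate of $\mathscr{C}$, viewed as a fixed element of some $\mathcal{C}([0,T]^j)$, (ii) one of the kernels $H_\tau^\theta(K)$ or $DH_\tau^\theta(K)$ from \eqref{eq:h_defn}, and (iii) at most two evaluations of $R$. Combining the uniform bounds $|H_\tau^\theta(K)| \leq 1$ and $\int_0^\theta |DH_\tau^\theta(K)|\dif{\tau} \leq 1$ from Lemma~\ref{lemma:prelim_bounds}(i)--(ii) with the Lipschitz estimates in parts (iii)--(iv) of that lemma, each summand is sup-norm continuous in $(R, K, \mathscr{C})$ on every bounded subset of the domain, which yields joint continuity of $\Phi^{(1)}$ and $\Phi^{(2)}$.

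For step two, take $\mathscr{C}_n \to \mathscr{C}$ in the product topology, and set $(R^n, K^n) = (\Psi^{(1)}(\mathscr{C}_n), \Psi^{(2)}(\mathscr{C}_n))$ and $(R, K) = (\Psi^{(1)}(\mathscr{C}), \Psi^{(2)}(\mathscr{C}))$. Since $\{\mathscr{C}_n\}$ is bounded, and since inspection of Lemma~\ref{lemma:prelim_bounds}(v) shows that its constants depend on the data only through sup norms of coordinates of $\mathscr{C}$, we obtain the a priori bound $\sup_n (\|R^n\|_{\infty} + \|K^n\|_{\infty}) < \infty$. Decomposing
\begin{align*}
R^n(t) - R(t) = \bigl[\Phi^{(1)}(R^n, K^n, \mathscr{C}_n)(t) - \Phi^{(1)}(R, K, \mathscr{C}_n)(t)\bigr] + \bigl[\Phi^{(1)}(R, K, \mathscr{C}_n)(t) - \Phi^{(1)}(R, K, \mathscr{C})(t)\bigr],
\end{align*}
and analogously for $K^n - K$, the second bracket vanishes as $n\to\infty$ by step one, while the first bracket obeys the same Lipschitz estimate in $(R^n - R, K^n - K)$ used to derive contractivity in Lemma~\ref{lemma:empirical_function}. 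Taking sup norms on $[0,t]$ and $[0,t]^2$ respectively and applying Gronwall's inequality then produces $\|R^n - R\|_{\infty} + \|K^n - K\|_{\infty} \to 0$, which is the desired continuity.

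The main obstacle is ensuring that both the a priori bound on $(R^n, K^n)$ and the Lipschitz constant of $\Phi^{(j)}$ in $(R, K)$ can be chosen uniformly in $n$; both reduce to the observation that the relevant constants depend on $\mathscr{C}_n$ only through its sup norm, which is controlled along any convergent sequence. Once this uniformity is secured, the remaining calculations are direct adaptations of the estimates already appearing in the proof of Lemma~\ref{lemma:empirical_function}.
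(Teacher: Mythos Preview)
Your proposal is correct and follows essentially the same route as the paper: decompose $R^n - R$ (and $K^n - K$) into a term varying $(R,K)$ with the data fixed and a term varying the data with $(R,K)$ fixed, bound the first via the contractivity estimates already established in Lemma~\ref{lemma:empirical_function}, control the second by continuity of $\Phi^{(j)}$ in $\mathscr{C}$, and close with a Gronwall argument. The paper is slightly more direct in that it works with two generic points $\mathscr{C},\tilde{\mathscr{C}}$ and obtains a Lipschitz bound $\|R-\tilde R\|_\infty \leq \|\mathscr{C}-\tilde{\mathscr{C}}\|_\infty \exp(C'T)$ rather than sequential continuity, but your added care about uniform a~priori bounds on $(R^n,K^n)$ is a point the paper glosses over.
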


\begin{proof}
We measure the discrepancy between $\mathscr{C}$ and $\tilde{\mathscr{C}}$ using the uniform topology on this space, and denote 
$\mathrm{d}(\mathscr{C}, \tilde{\mathscr{C}}) = \| \mathscr{C} - \tilde{\mathscr{C}} \|_{\infty}$. Define $R,K,\tilde{R}$, $\tilde{K}$ via the fixed point equations 
\begin{align}
    R = \Phi^{(1)} (R, K, \mathscr{C}), \,\,\, K= \Phi^{(2)} (R,K, \mathscr{C}), \nonumber \\
    \tilde{R} = \Phi^{(1)}(\tilde{R}, \tilde{K}, \tilde{\mathscr{C}}), \,\,\, \tilde{K} = \Phi^{(2)}(\tilde{R}, \tilde{K}, \tilde{\mathscr{C}}). \nonumber 
\end{align}
Observe that 
\begin{align}
    \|R - \tilde{R}\|_{\infty} &= \| \Phi^{(1)}(R, K, \mathscr{C}) - \Phi^{(1)}(\tilde{R}, \tilde{K}, \tilde{\mathscr{C}}) \|_{\infty} \nonumber \\
    &\leq \| \Phi^{(1)}(R,K,\mathscr{C}) - \Phi^{(1)}(\tilde{R}, \tilde{K}, \mathscr{C}) \|_{\infty}  + \| \Phi^{(1)}(\tilde{R},\tilde{K},\mathscr{C}) - \Phi^{(1)}(\tilde{R}, \tilde{K}, \tilde{\mathscr{C}}) \|_{\infty}. \nonumber 
\end{align}
Controlling the second term, we note from \eqref{eq:R_representation} that 
\begin{align}
    \|\Phi^{(1)}(\tilde{R}, \tilde{K}, \mathscr{C}) - \Phi^{(1)}(\tilde{R},\tilde{K}, \tilde{\mathscr{C}}) \|_{\infty} \leq \|\mathscr{C} - \tilde{\mathscr{C}} \|_{\infty}. \nonumber 
\end{align}
On the other hand, using the same argument as in the proof of Lemma \ref{lemma:empirical_function}, we have that 
\begin{align}
    \| \Phi^{(1)}(R,K,\mathscr{C}) - \Phi^{(1)}(\tilde{R},\tilde{K}, \mathscr{C}) \|_{\infty} \leq C \Big[ \int_0^{T} | K(s,s) - \tilde{K}(s,s) | \dif{s} + \int_0^{T} |R(s) - \tilde{R}(s)| \dif{s} \Big]. \nonumber 
\end{align}
Combining, we obtain, 
\begin{align}
    \|R - \tilde{R}\|_{\infty} \leq \| \mathscr{C} - \tilde{\mathscr{C}} \|_{\infty} + C \Big[ \int_0^{T} | K(s,s) - \tilde{K}(s,s) | \dif{s} + \int_0^{T} |R(s) - \tilde{R}(s)| \dif{s} \Big]. \nonumber 
\end{align}
Thus there exists $C'>0$ such that 
\begin{align}
    \|R - \tilde{R}\|_{\infty} \leq \| \mathscr{C} - \tilde{\mathscr{C}} \|_{\infty} \exp(C'T). \nonumber 
\end{align}
Thus $\mathscr{C}\to \Psi^{(1)}(\mathscr{C})$ is Lipschitz. 
The argument for $\Psi^{(2)}$ is analogous, and thus omitted. This completes the proof. 
\end{proof}

Our next lemma establishes that under the two initial conditions introduced in Section  \ref{sec:Introduction}, the low-dimensional statistics $\mathcal{C}_N$ converge almost surely to deterministic limits. We defer the proof of this lemma to Section \ref{sec:deterministic_proof}. 
\begin{lemma}
\label{lemma:deterministic} 
Under the i.i.d. and i.i.d. under rotated basis initial conditions, each element in $\mathcal{C}_N$ converges to deterministic limits almost surely. 
\end{lemma}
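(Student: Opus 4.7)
My plan is to establish almost sure convergence of each functional $\int f\,\dif{\nu_N}$ with $f \in \mathcal{F}$ in the sup-norm topology on the appropriate $\mathcal{C}([0,T]^j)$. I will split the task into three stages: (a) decoupling the Brownian-path randomness from the dependence on the triples $(Y_0^i, u^i, \sigma^i)$; (b) proving convergence of the empirical averages involving only these triples; (c) promoting pointwise-in-time convergence to uniform convergence on $[0,T]^j$.

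For stage (a), I observe that $\bB_t = \bG \bW_t$ with $\bG$ orthogonal and $\bW$ a standard Brownian motion independent of $(\bG, \bD, \bX_0, V)$; hence the paths $\{B^i_\bullet\}_{i=1}^N$ remain i.i.d.\ standard Brownian motions, jointly independent of $\{(Y_0^i, u^i, \sigma^i)\}_{i=1}^N$. This lets me condition on the triples when analyzing the Brownian-path averages.

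For stage (b), consider a representative function $h(Y_0, u, \sigma)$ (the type of $f_1, f_2, f_4, f_5$ with a bounded continuous $w$-dependence through $e^{w\sigma}$). Under condition (ii), the $(Y_0^i, u^i)$ are i.i.d.\ with finite second moments and the $\sigma^i$ are deterministic with $\mu_{D_N} \Rightarrow \mu_D$, so I decompose
\[
\frac{1}{N}\sum_{i=1}^N h(Y_0^i, u^i, \sigma^i) = \frac{1}{N}\sum_{i=1}^N \big[h(Y_0^i, u^i, \sigma^i) - g(\sigma^i)\big] + \frac{1}{N}\sum_{i=1}^N g(\sigma^i),
\]
where $g(\sigma) := \mathbb{E}\, h(Y_0, u, \sigma)$. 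The first sum is a sum of $N$ independent centered terms with uniformly bounded variances (using the uniform boundedness of the $\sigma^i$); it vanishes a.s.\ via Borel-Cantelli applied to a fourth-moment bound. The second sum converges to $\int g\,\dif{\mu_D}$ by weak convergence of $\mu_{D_N}$ and continuity of $g$. Under condition (i), $(Y_0^i, u^i) = ((\bG \bX_0)^i, \sqrt{N}(\bG V)^i)$ are coordinates of a Haar rotation applied to vectors with i.i.d.\ components; conditional on $(\bX_0, V)$, standard Haar-concentration estimates yield that the joint empirical measure $\frac{1}{N}\sum_i \delta_{(Y_0^i, u^i)}$ concentrates about a bivariate Gaussian whose covariance is the scaled Gram matrix of $(\bX_0, V)$. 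By SLLN applied to the i.i.d.\ components of $\bX_0$ and $V$, this Gram matrix converges a.s.\ to $\bigl(\begin{smallmatrix} 1 & \rho\sqrt{\lambda} \\ \rho\sqrt{\lambda} & \lambda \end{smallmatrix}\bigr)$. Combining Haar concentration with Borel-Cantelli yields the desired almost sure convergence to the appropriate deterministic integrals against $\pi^{\infty}$, independence of $\bsigma$ from $(\bY_0, \bu)$ being a consequence of $\bD$ and $(\bX_0, V)$ being independent.

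For stage (c), the functions in $\mathcal{F}_2 \cup \mathcal{F}_3$ depend on the Brownian paths only through finitely many marginals $B^i_{t}, B^i_{s}, B^i_{s_1}, B^i_{s_2}$. At each fixed choice of time arguments, the empirical average is a sum of independent summands (conditionally on the triples) with variance $O(1/N)$, so the pointwise-in-time convergence is immediate from the stage (b) template. To upgrade to uniform convergence on $[0,T]^j$, the plan is to exploit the $\tfrac12 - \varepsilon$ Hölder regularity of Brownian paths together with the uniform boundedness of the factors $e^{w\sigma^i}$ on $[0,T]$. This produces a uniform $L^p$ modulus-of-continuity bound on the summands; a Kolmogorov-Chentsov / chaining estimate applied to the averages, combined with almost sure pointwise convergence on a countable dense subset of $[0,T]^j$, then delivers almost sure sup-norm convergence. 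The main obstacle is precisely this last stage: carefully bounding $L^p$-increments of averages such as $(1/N)\sum_i B^i_{s_1} B^i_{s_2} e^{w\sigma^i}$ uniformly in $(s_1, s_2, w) \in [0,T]^3$, then applying a union bound over a dyadic mesh of $[0,T]^j$; once this is in place, almost sure convergence of the entire finite family $\mathcal{C}_N$ in the product topology follows.
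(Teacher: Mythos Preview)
Your two-stage template (pointwise a.s.\ convergence, then a modulus-of-continuity estimate to upgrade to sup-norm) is exactly the paper's strategy. The implementations diverge, and at one point yours has a gap as written. For the equicontinuity step on the Brownian-dependent functionals in $\mathcal{F}_2\cup\mathcal{F}_3$, the paper does \emph{not} go through Kolmogorov--Chentsov; it rewrites each empirical average as a bilinear or quadratic form, for instance $\tfrac{1}{N}\sum_i u^i B_t^i e^{w\sigma^i}=\tfrac{1}{\sqrt{N}}V^\top\bG^\top\boldsymbol{D}_w\bG\bW_t$ with $\boldsymbol{D}_w=\mathrm{diag}(e^{w\sigma^i})$, and bounds the increment in $(t,w)$ pathwise by $\|V\|_2\,\|\boldsymbol{D}_{w_1}\|_2\,\tfrac{1}{\sqrt{N}}\|\bW_{t_1}-\bW_{t_2}\|_2+\|V\|_2\,\|\boldsymbol{D}_{w_1}-\boldsymbol{D}_{w_2}\|_2\,\tfrac{1}{\sqrt{N}}\|\bW_{t_2}\|_2$. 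Each factor is almost surely bounded uniformly in $N$ (via SLLN and the uniform LLN $\sup_{s,t}\lvert\tfrac{1}{N}\|\bW_t-\bW_s\|_2^2-|t-s|\rvert\to 0$), which directly produces an a.s.\ H\"older constant for the average that is uniform in $N$. For condition~(i) the paper also sidesteps Haar concentration of empirical measures: it conditions on $(\bG,\bX_0)$ and uses the exact Gaussianity of $V$ to get sub-Gaussian concentration of, e.g., $\tfrac{1}{\sqrt{N}}V^\top\bG^\top\boldsymbol{D}_w\bG\bX_0$, then invokes Borel--Cantelli.

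The gap in your plan is at Stage~(c). A uniform-in-$N$ $L^p$ bound on the increments of the averages, fed into Kolmogorov--Chentsov, yields for each $N$ a random H\"older constant $H_N$ with $\sup_N\E[H_N^p]<\infty$; but this does \emph{not} give $\sup_N H_N<\infty$ almost surely, and it is the latter (a.s.\ equicontinuity) that is needed to pass from pointwise a.s.\ convergence on a countable dense set to uniform a.s.\ convergence. You need one further ingredient: either bound the H\"older constant of the average \emph{pathwise} by $\tfrac{1}{N}\sum_i C_i$ with $C_i$ built from $\|B^i\|_{C^\alpha[0,T]}$ and $\sup_{t\le T}|B^i_t|$ (these are i.i.d.\ with finite mean by Fernique), and conclude via SLLN that $\sup_N\tfrac{1}{N}\sum_i C_i<\infty$ a.s.; or show the centred average has $L^p$-H\"older norm of order $N^{-1/2}$ and apply Borel--Cantelli with $p>2$. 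Either fix closes the argument, but neither is what ``Kolmogorov--Chentsov applied to the averages, combined with pointwise convergence on a dense set'' delivers on its own. The paper's bilinear-form trick avoids this issue entirely by producing the uniform-in-$N$ pathwise bound directly.
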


\noindent 
Finally, we complete the proof of Theorem \ref{thm:limits}, assuming Lemma \ref{lemma:empirical_function}, \ref{lemma:continuity} and Lemma \ref{lemma:deterministic}.  

\begin{proof}[Proof of Theorem \ref{thm:limits}]
Fix $T>0$. Lemma \ref{lemma:empirical_function}, \ref{lemma:continuity}  and \ref{lemma:deterministic} together imply that $R^N$, $K^N$ converge to deterministic functions $R$ and $K$ respectively. It remains to characterize these limits. Note that from \eqref{eqn:integral-form}, 
\begin{align}
    Y_t^i &= \exp\left\{ \int_0^t \big[ \sigma^i - f'(K^{N}(s)) \big]  \dif{s} \right\} \left\{ Y_0^i+ u^i \int_0^t \exp\left\{ -  \int_0^s \big[ \sigma^i - f'(K^{N}(r)) \big] \dif{r} \right\}  R^{N}(s)   \dif{s}   \right\} \\
	& \quad + \beta^{-1/2} \int_{0}^t \exp\left\{ \int_{s}^{t} \big[ \sigma^i - f'(K^{N}(r)) \big] \dif{r}   \right\}  \dif{B^i_s}.
\end{align}
We recall that $R^N(t) = \frac{1}{N} \sum_{i=1}^{N} u^i Y^i_t$ and $K^N(t,s) = \frac{1}{N} \sum_{i=1}^{N} Y_t^i Y_s^i$, and that $R^N \to R$, $K^N \to K$ uniformly. Thus calculating $R^N$, $K^N$ and setting $N \to \infty$, we obtain the desired fixed point equations in the limit. Note that the final limit operation requires that $\nu= \frac{1}{N} \sum_{i=1}^{N} \delta_{Y_0^i, u^i, \sigma^i, B_{\bullet}^i}$ converges to the claimed limits under the i.i.d. and rotated i.i.d. initial conditions. This completes the proof. 
\end{proof}

\subsection{Proof of Lemma \ref{lemma:deterministic} }
\label{sec:deterministic_proof} 

We prove Lemma \ref{lemma:deterministic} in this section. Note that for $f \in \mathcal{F}_j$, $j=1,2,3$, $\int f \dif{\nu} \in \mathcal{C}([0,T]^j)$. Thus in the lemma above, we mean specifically that $\int f \dif{\nu}$ converges almost surely as a $\mathcal{C}([0,T]^j)$-valued random variable. Throughout, we equip $\mathcal{C}([0,T]^j)$ with a sup-norm.

\begin{proof}
Before embarking on a formal proof, we summarize our general proof strategy. The proof follows in two stages--first, we establish that for any fixed $x \in [0,T]^j$, $\int f \dif{\nu} (x)$ converges almost surely. We next prove an additional Holder continuity property of $\int f \dif{\nu}$, uniformly in $n$, that allows us to bootstrap the above pointwise a.s. convergence to a functional a.s. convergence statement.

We first consider the case of the initial condition (ii). Here, almost sure convergence of $\int f \dif{\nu} (x)$ for any fixed $x \in [0,T]^j$ follows immediately from the Strong Law of Large Numbers. 
We next establish that for sufficiently large $n$, the following holds for each element of $\mathcal{C}_N$:
there exists $C,\alpha >0$, independent of $n$, such that  almost surely, for any $x,y \in [0,1]^j$, 
\begin{align}\label{eq:holder}
   | \int f \dif{\nu} (x) - \int f \dif{\nu} (y) | \leq C \| x- y\|^{\alpha} . 
\end{align}
Before we delve into the proof, we argue that  \eqref{eq:holder} suffices to establish convergence of $\int f \dif{\nu}$ to $\mathbb{E} \int f \dif{\nu}$ in sup-norm. 

To see this, first observe that for every $f \in \mathcal{F}_j, j=1,2,3$, $\mathbb{E}[\int f \dif{\nu}] \in \mathcal{C}([0,T]^j)$, by the explicit form of these functions.
Thus $\mathbb{E}[\int f \dif{\nu}](\cdot)$ is uniformly continuous on $[0,T]^j$. This implies that if we fix $\epsilon >0$, there exists $\delta_{f} > 0$ such that whenever $|t_1-t_2| \leq \delta_f$, we have that
 \[\Big|\mathbb{E}\Big[\int f \dif{\nu} \Big](t_1)-\mathbb{E}\Big[\int f \dif{\nu}\Big](t_2) \Big|\leq \epsilon.\]
Define $\delta=\min\{\delta_f: f \in \mathcal{F}\}$ and fix any $0<\delta'<\delta $. 
Let $\{x_1, \cdots, x_{\ell}\}$ denote a $\delta'$-net of $[0,1]^j$. For any $y \in [0,T]^j$, if we denote $x_t$ to be the nearest point in the net, we know that for sufficiently large $n$,

\begin{align}
\Big| \Big[\int f \dif{\nu}\Big](y) -\mathbb{E}\Big[\int f \dif{\nu} \Big](y) \Big| & \leq \Big| \Big[\int f \dif{\nu} \Big](y)- \Big[\int f \dif{\nu}\Big](x_t) \Big|\label{eq:step1}\\
& + \Big| \Big[\int f \dif{\nu} \Big](x_t)-\mathbb{E}\Big[\int f \dif{\nu}\Big](x_t) \Big|+ \Big|\mathbb{E}\Big[\int f \dif{\nu}\Big](x_t)-\mathbb{E}\Big[\int f \dif{\nu}\Big](y)\Big|\nonumber\\
& \leq C (\delta')^\alpha + 2\epsilon.
\nonumber
    \end{align}
    Since the RHS does not depend on $y$, and $\epsilon, \delta'>0$ can be arbitrary as long as $\delta' < \delta$, we have the required sup-norm convergence of $\int f \dif{\nu}$ as a function on $\mathcal{C}([0,T]^j)$. 
We shall next establish Holder continuity of $\int f \dif{\nu}$ (uniformly in $n$), that is, property \eqref{eq:holder} for each $f \in \mathcal{F}$.

To begin, let us consider $f_1$. Observe that 
\begin{align}
    \Big|\frac{1}{N} \sum_{i=1}^{N} u^i Y_{0}^i \exp(w_1 \sigma^i) - \frac{1}{N} \sum_{i=1}^{N} u^i Y_{0}^i \exp(w_2 \sigma^i) \Big| &\leq \exp(\|\sigma \|_{\infty} T)   \Big( \frac{1}{N} \sum_{i=1}^{N} |u^i| |Y_{0}^i| \Big) |w_1- w_2|\\  \nonumber 
    & \leq C |w_1-w_2|,
\end{align}
 where the last inequality is true a.s. with $C$ independent of N only for sufficiently large $n$ on using the SLLN. Thus $f_1$ is Lipschitz almost surely for sufficiently large $n$.
For the above upper bound, recall that we always have $\|\sigma_{i}\|_{\infty} \leq 2\max\{ |d_{+}|, |d_{-}|\}$ for $n$ sufficiently large. 
Similar arguments work for $f_2$,
$f_4$, $f_5$, so we skip presenting those details here. 

We next turn to $f_3$. 
 Recall that 
\begin{align}
    \frac{1}{N}\sum_{i=1}^{N} u^i B_t^i = \frac{1}{\sqrt{N}} \sum_{i=1}^N U^i B_t^i = \frac{1}{\sqrt{N}} \sum_{i=1}^{N} V^i W_t^i, \nonumber 
\end{align}
where the last equality follows from the orthogonality of the matrix $\bG$. Recall that $V^i \sim \mathcal{N}(0, \frac{\lambda}{N})$. This implies 
\begin{align}
    \mathbb{P}\Big[\sup_{0 \leq t \leq T} \Big| \sum_{i=1}^{N} V^i W_t^i  \Big| > \sqrt{N} \varepsilon  \Big] &\leq \mathbb{P}\Big[\sup_{0 \leq t \leq T} \Big| \sum_{i=1}^{N} V^i W_t^i  \Big| > \sqrt{N} \varepsilon , \sum_{i=1}^{N} (V^{i} )^2 \leq C' \Big] + \mathbb{P}\Big[\sum_{i=1}^{N} (V^{i} )^2 \geq C' \Big] \nonumber \\
    &\leq 10 \exp(-CN) 
\end{align}
using standard deviation bounds for the supremum of a Brownian motion and a Chernoff bound on $\sum_{i=1}^{N} (V^i)^2$. 
This completes the proof for $f_3$. 

Next, we move to functions in $\mathcal{F}_2$. First note that for $f_8$, we have that $\int f_8 \dif {\nu} \rightarrow \mathbb{E}\int f_8 \dif{\nu}$ on $\mathcal{C}([0,T]^2)$, by the Uniform Law of Large Numbers and properties of Brownian Motion.  
Next, we present the proof for $f_6$---the proofs for $f_7$ and $f_9$ are similar. 
To show Holder continuity of $f_6$, set $\boldsymbol{D}_{w} = \textrm{diag}(\exp(w \sigma^1) \cdots, \exp(w \sigma^N))$. Then we have, 
\begin{align}
    \frac{1}{N} \sum_{i=1}^{N} u^i B_t^{i} \exp(w \sigma^i) = \frac{1}{\sqrt{N}} V^{\top} \bG^{\top} \boldsymbol{D}_w \bG \bW_t. \nonumber 
\end{align}
Thus for any $(t_1, w_1), (t_2 ,w_2) \in [0,T]^2$, 
\begin{align}
    &\Big| \frac{1}{\sqrt{N}} V^{\top} \bG^{\top} \boldsymbol{D}_{w_1} \bG \bW_{t_1} - \frac{1}{\sqrt{N}} V^{\top} \bG^{\top} \boldsymbol{D}_{w_2} \bG \bW_{t_2} \Big|  \nonumber \\
    &\leq \frac{1}{\sqrt{N}}\Big| V^{\top} \bG^{\top} \boldsymbol{D}_{w_1} \bG (\bW_{t_1} - \bW_{t_2})  \Big| + \frac{1}{\sqrt{N}} | V^{\top} \bG^{\top} (\boldsymbol{D}_{w_1} - \boldsymbol{D}_{w_2}) \bG \bW_{t_2} | \nonumber \\
    &\leq \| \bG V \|_2 \| \boldsymbol{D}_{w_1} \|_2 \frac{1}{\sqrt{N}} \| \bW_{t_1} - \bW_{t_2} \|_2 + \| \bG V \|_2 \| \boldsymbol{D}_{w_1} - \boldsymbol{D}_{w_2} \|_2 \frac{1}{\sqrt{N}} \| \bW_{t_2} \|_2. \nonumber 
\end{align}
 There exists $C>0$  such that almost surely,  
$\|\bG V\|_2 = \|V\|_2< C$ , $\sup_{w \in [0,T]}\| \boldsymbol{D}_{w_1} \|_2 \leq \exp(\| \sigma \|_{\infty}T)$. Further, $\| \boldsymbol{D}_{w_1} - \boldsymbol{D}_{w_2} \|_2 \leq C | w_1 - w_2|$, and 
\begin{align}
    \sup_{t_1, t_2 \in [0,T]} \Big| \frac{1}{N} \sum_{i=1}^{N} (W_{t_1}^i - W_{t_2}^i)^2 - |t_1-t_2| \Big| \stackrel{as}{\to} 0. \nonumber 
\end{align}
Thus there exists constants $C,C'>0$ such that almost surely, 
\begin{align}
   &\Big| \frac{1}{\sqrt{N}} V^{\top} \bG^{\top} \boldsymbol{D}_{w_1} \bG \bW_{t_1} - \frac{1}{\sqrt{N}} V^{\top} \bG^{\top} \boldsymbol{D}_{w_2} \bG \bW_{t_2} \Big|  \nonumber \\
   &\leq C (\sqrt{|t_1 - t_2|} + | w_1 - w_2|) \leq C' (\sqrt{|t_1 - t_2|} + \sqrt{| w_1 - w_2|}) , \nonumber 
\end{align}
where the last inequality follows from the fact that $w_1,w_2 \in [0,T]$, so that we always have $\sqrt{|w_1 - w_2|} \leq \sqrt{2T}$.
The other functions in $\mathcal{F}_2$ may be controlled using analogous arguments. 

Finally, we move onto the functions in $\mathcal{F}_3$. We sketch the proof for $f_{12}$; the same proof works for $f_{10}$, $f_{11}$.
Set $\tilde{\boldsymbol{D}}_{w} = \mathrm{diag}( (\sigma^i)^2 \exp(w \sigma_i))$. Thus we have, for $(w,s_1,s_2), (w', s_1', s_2') \in [0,T]^3$, 
\begin{align}
   & \Big| \frac{1}{N} \sum_{i=1}^{N} (\sigma^i)^2 \exp(w \sigma^i) B_{s_1}^i B_{s_2}^i - \frac{1}{N} \sum_{i=1}^{N} (\sigma^i)^2 \exp( w' \sigma^i) B_{s_1'}^i B_{s_2'}^i \Big| \nonumber\\
   &= \frac{1}{N} \Big| \bB_{s_1}^{\top} \tilde{\boldsymbol{D}}_w \bB_{s_2}  -  \bB_{s_1'}^{\top} \tilde{\boldsymbol{D}}_{w'} \bB_{s_2'} \Big| \leq  C(\sqrt{|s_1 - s_1'|} + | w - w'| + \sqrt{|s_2 - s_2'|}) \\
  & \leq C'(\sqrt{|s_1 - s_1'|} + \sqrt{| w - w'|} + \sqrt{|s_2 - s_2'|}) \nonumber 
\end{align}
almost surely for some universal constants $C,C'>0$. This completes the proof for initial condition (ii). 

Next, we turn to the initial condition (i). The main difference now lies in the fact that the pointwise almost sure convergence no longer follows directly from SLLN for all choices of $\mathcal{F}$. First, we observe that the difference in initial conditions only affects $f_1$, $f_4$, $f_5$ and $f_9$---thus we can restrict to these specific functions. We will use the same strategy to establish functional almost sure convergence, starting from the pointwise a.s. convergence, thus we omit those details. 
We present the proof for $f_1$--the proofs for $f_4,f_5,f_9$ are similar. Fix $w \in [0,T]$. We have, 
\begin{align}
    \frac{1}{N} \sum_{i=1}^{N} u^i Y_0^i \exp(w \sigma^i) = \frac{1}{\sqrt{N}} V^{\top} \bG^{\top} \boldsymbol{D}_w \bG \bX_0, 
\end{align}
where we use $\boldsymbol{D}_w= \mathrm{diag}(\exp(w \sigma^{i}))$. We observe that 
\begin{align}
    \mathbb{E}\Big[\frac{1}{N} \sum_{i=1}^{N} u^i Y_0^i \exp(w \sigma^i) \Big] = \mathbb{E}\Big[\mathbb{E}[\frac{1}{\sqrt{N}} V^{\top} \bG^{\top} \boldsymbol{D}_{w} \bG \bX_0 | \bG, \bX_0] \Big] =0. \nonumber 
\end{align}
since entries of $V$ are i.i.d. with mean $0$. 
Now, there exists $M_0>0$ such that $\mathcal{E}_n =\{\sum_{i=1}^{N} (X_0^i)^2 < N\cdot M_0\} $ occurs eventually almost surely. Given $\bG$ and $\bX_0$, $V^{\top} \bG^{\top} \boldsymbol{D}_w \bG \bX_0 \sim \mathcal{N}(0, \lambda (\bX_0^{\top} \bG^{\top} \boldsymbol{D}_w \bG \bX_0)/N)$. Moreover, 
\begin{align}
    \frac{\bX_0^{\top} \bG^{\top} \boldsymbol{D}_w \bG \bX_0}{N} \leq \| \boldsymbol{D}_w \|_2 \frac{\sum_{i=1}^{N} (X_0^i)^2}{N}. \nonumber 
\end{align}
Thus on the event $\mathcal{E}_n$, for any $\varepsilon>0$, 
\begin{align}
    \mathbb{P}\Big[ \Big|\frac{1}{\sqrt{N}} V^{\top} \bG^{\top} \boldsymbol{D}_{w} \bG \bX_0 \Big| > \varepsilon | \bG, \bX_0  \Big]  \leq 2 \exp( - \frac{N\varepsilon^2}{2M'}) \nonumber
\end{align}
for some universal constant $M'>0$. The proof is complete on using the Borel Cantelli lemma. 

\end{proof}

\bibliographystyle{alpha}
\bibliography{ref.bib}

\end{document}